\crefname{hypothesis}{Hypothesis}{Hypotheses}
\title{Lasso hyperinterpolation over general regions\thanks{Submitted to the editors DATE.
}}
\author{Congpei An\thanks{School of Economic Mathematics, Southwestern University of Finance and Economics, Chengdu, China 
  (\email{ancp@swufe.edu.cn}).}
\and Hao-Ning Wu\thanks{Department of Mathematics, The University of Hong Kong, Hong Kong, China
  (\email{hnwu@hku.hk}).}}
\begin{document}

\maketitle

\begin{abstract}
This paper develops a fully discrete soft thresholding polynomial approximation over a general region, named Lasso hyperinterpolation. This approximation is an $\ell_1$-regularized discrete least squares approximation under the same conditions of hyperinterpolation. Lasso hyperinterpolation also uses a high-order quadrature rule to approximate the Fourier coefficients of a given continuous function with respect to some orthonormal basis, and then it obtains its coefficients by acting a soft threshold operator on all approximated Fourier coefficients. Lasso hyperinterpolation is not a discrete orthogonal projection, but it is an efficient tool to deal with noisy data. We theoretically analyze Lasso hyperinterpolation for continuous and smooth functions. The principal results are twofold: the norm of the Lasso hyperinterpolation operator is bounded independently of the polynomial degree, which is inherited from hyperinterpolation; and the $L_2$ error bound of Lasso hyperinterpolation is less than that of hyperinterpolation when the level of noise becomes large, which improves the robustness of hyperinterpolation. Explicit constructions and corresponding numerical examples of Lasso hyperinterpolation over intervals, discs, spheres, and cubes are given.
\end{abstract}

\begin{keywords}
  Lasso, hyperinterpolation, polynomial approximation, quadrature rule, noise.
\end{keywords}

\begin{AMS}
  65D15, 65D05, 41A10, 33C52
\end{AMS}


\section{Introduction}
\label{sec:introduction}
Hyperinterpolation over compact subsets or manifolds was introduced by Sloan in 1995 \cite{sloan1995polynomial}. Coefficients of an $L_2$ orthogonal projection from the space of continuous functions onto the space of polynomials of degree at most $L$ are expressed in the form of Fourier integrals, and hyperinterpolation of degree $L$ is constructed by approximating these integrals via a quadrature rule that exactly integrates all polynomials of degree at most $2L$. Thus hyperinterpolation is a numerical discretization of the $L_2$ orthogonal projection, and it is highly related to some spectral methods in solving differential and integral equations, which are known as discrete Galerkin methods \cite{atkinson2019spectral,graham2002fully,hansen2009norm}. In the past decades, hyperinterpolation has attracted many interests, and a lot of important works have been done, for example, see \cite{an2012regularized,caliari2007hyperinterpolation,caliari2008hyperinterpolation,dai2006generalized,de2009new,hansen2009norm,le2001uniform,pieper2009vector,reimer2000hyperinterpolation,sloan2000constructive,sloan2012filtered} and references therein.

Hyperinterpolation is a discrete least squares approximation scheme with a high-order quadrature rule assumed, which was revealed in \cite{sloan1995polynomial}, thus it requires the concerned function to be sampled on a well chosen finite set to achieve the high algebraic accuracy of the quadrature rule. With elements in such a set and corresponding sampling values of the function deemed as input and output data, respectively, studies on hyperinterpolation assert that it is an effective approach to modeling mappings from input data to output data. However, in real-world applications, one possibly has noisy samples. In this paper, we propose a novel strategy, \emph{Lasso hyperinterpolation}, with Lasso incorporated into hyperinterpolation, to handle noise. Lasso, the acronym of ``least absolute shrinkage and selection operator'', is a shrinkage and selection method for linear regression \cite{tibshirani1996regression}, which is blessed with the abilities of denoising and feature selection. Lasso hyperinterpolation is a constructive approximation: based on hyperinterpolation, Lasso hyperinterpolation proceeds all hyperinterpolation coefficients by a soft threshold operator. Thus it is not only feasible to study approximation properties of it, but also easy to implement this novel scheme.

When the level of noise is relatively small, least squares approximation is shown to be able to reduce noise \cite{hesse2017radial,MR4118851,le2008localized}. However, this method is not suitable when the level of noise becomes large. There have been attempts to improve the robustness of hyperinterpolation with respect to noise, for example, filtered hyperinterpolation \cite{sloan2012filtered} and Tikhonov regularized discrete least squares approximation \cite{an2012regularized,an2020tikhonov}. Filtered hyperinterpolation filters hyperinterpolation coefficients by some filters (for different kinds of filters, we refer to \cite{an2012regularized,filbir2008polynomial,sloan2011polynomial}), shrinking these coefficients continuously as the order of the basis element increasing. The mechanism of Tikhonov regularized least squares approximation is similar; actually, it was revealed in \cite{an2012regularized} that Tikhonov regularized least squares approximation reduces to filtered hyperinterpolation on the unit two-sphere with a certain filter. Both attempts improve the performance of hyperinterpolation in dealing with noisy samples. However, continuous filtering or shrinking may not work as well as ``Lasso hyperinterpolation'' in denoising, which proceeds these coefficients by a soft threshold operator. Apart from denoising, Lasso is also blessed with the feature selection ability. In the context of hyperinterpolation, a feature is a basis element, and feature selection is called basis element selection in this paper. Hyperinterpolation and its filtered variant do not hold such an ability, whereas Lasso hyperinterpolation can select basis elements with higher relevancy to the concerned function and dismiss the rest in order to simplify the expansion. The level of relevancy can be determined by controlling parameters in Lasso hyperinterpolation.

We will study approximation properties of Lasso hyperinterpolation and provide error analysis. Inherited from hyperinterpolation, the norm of the Lasso hyperinterpolation operator is bounded independently of the polynomial degree $L$. However, Lasso hyperinterpolation does not possess the convergence property of hyperinterpolation as $L\rightarrow\infty$. It is shown that in the absence of noise, the $L_2$ error of Lasso hyperinterpolation for any nonzero continuous function converges to a nonzero term, which depends on the best approximation of the function, whereas such an error of both hyperinterpolation and filtered hyperinterpolation converges to zero. However, in the presence of noise, Lasso hyperinterpolation is able to reduce the newly introduced error term caused by noise, via multiplying a factor less than one. Similar results are also obtained when the function is blessed with additional smoothness.

The rest of this paper is organized as follows. In Section \ref{sec:review}, we review some basic ideas of quadrature and hyperinterpolation. In Section \ref{sec:lassoandsparsity}, we display how Lasso is incorporated into hyperinterpolaion, and analyze the basis element selection ability of Lasso hyperinterpolation. In Section \ref{sec:general}, we study Lasso hyperinterpolation over general regions, presenting some properties of the Lasso hyperinterpolation operator and deriving error bounds. In this analysis, we consider two cases: Lasso hyperinterpolation of continuous functions and smooth functions, respectively. Section \ref{sec:example} focuses on four concrete examples on the interval, the unit disc, the unit two-sphere, and the unit cube, respectively, and provides some numerical examples. 

\section{Backgrounds}
\label{sec:review}
In this section, we review some basic ideas of hyperinterpolation. Let $\Omega$ be a compact and smooth Riemannian manifold in $\mathbb{R}^s$ with smooth or empty boundary and measure $\text{d}\omega$. The manifold $\Omega$ is assumed to have finite measure with respect to a given positive measure $\text{d}\omega$, that is,
\begin{equation*}
\int_{\Omega}\text{d}\omega=V<\infty.
\end{equation*}
We wish to approximate a nonzero $f\in \mathcal{C}(\Omega)$ (possibly noisy) by a polynomial in $\mathbb{P}_L$, which is the linear space of polynomials on $\Omega$ of degree at most $L$. 
\subsection{Quadratures}
With respect to the given finite measure $\text{d}\omega$, an inner product between functions $v$ and $z$ on $\Omega$ is defined as
\begin{equation}\label{equ:innerproduct}
\left<v,z\right>=\int_{\Omega}vz\text{d}\omega,
\end{equation}
which is a definition involving integrals. Quadrature, in a computational perspective, is a standard term for numerical computation of integrals, and is also one of the techniques that approximation theory can be linked to applications immediately \cite{trefethen2013approximation}. Assume that we are given a quadrature rule of the form
\begin{equation}\label{equ:quadrature}
\sum\limits_{j=1}^Nw_jg(\mathbf{x}_j)\approx\int_{\Omega}g\text{d}\omega
\end{equation}
with the property that it exactly integrates all polynomials of degree at most $2L$, where $\mathcal{X}_N:=\{\mathbf{x}_1,\ldots,\mathbf{x}_N\}\subset\Omega$ is a set of $N$ distinct points in $\Omega$ and quadrature weights $w_j$ are positive for all $1\leq j\leq N$. That is, we require
\begin{equation}\label{equ:exactquadrature}
\sum\limits_{j=1}^Nw_jg(\mathbf{x}_j)=\int_{\Omega}g\text{d}\omega\quad\forall g\in\mathbb{P}_{2L}.
\end{equation}
Based on the assumed quadrature, Sloan introduced a ``discrete inner product'' \cite{sloan1995polynomial} 
\begin{equation}\label{equ:discreteinnerproduct}
\left<v,z\right>_{N}:=\sum_{j=1}^Nw_jv(\mathbf{x}_j)z(\mathbf{x}_j),
\end{equation}
corresponding to the ``continuous'' inner product \eqref{equ:innerproduct}.

\subsection{Hyperinterpolation}
Hyperinterpolation is a discretization of the $L_2$ orthogonal projection $\mathcal{P}_Lf$ of $f\in\mathcal{C}(\Omega)$ onto $\mathbb{P}_L$. Let $d:=\dim\mathbb{P}_L$ be the dimension of $\mathbb{P}_L$, and let $\{p_1,\ldots,p_{d}\}\subset\mathbb{P}_L$ be an orthonormal basis of $\mathbb{P}_L$, that is,
\begin{equation}\label{equ:delta1}
\left<p_i,p_j\right>=\delta_{ij},\quad1\leq i,j\leq d,
\end{equation}
where $\delta_{ij}$ is the Kronecker delta. Thus $\mathcal{P}_Lf$ is defined as
\begin{equation}
\mathcal{P}_Lf:=\sum_{\ell=1}^{d}\left<f,p_{\ell}\right>p_{\ell}.
\end{equation}
If the integral is evaluated by the quadrature rule, then hyperinterpolation $\mathcal{L}_Lf$ is defined analogously to $\mathcal{P}_Lf$.
\begin{definition}[\cite{sloan1995polynomial}]
Given a quadrature rule \eqref{equ:quadrature} with exactness \eqref{equ:exactquadrature}. A \emph{hyperinterpolation} of $f$ onto $\mathbb{P}_L$ is defined as
\begin{equation}\label{equ:hyperinterpolation}
\mathcal{L}_Lf:=\sum_{\ell=1}^{d}\left<f,p_{\ell}\right>_Np_{\ell}.
\end{equation}
\end{definition}

As the most degree $2L$ of $p_ip_j$ ensures the exactness \eqref{equ:exactquadrature} of the quadrature, we have $\left<p_i,p_j\right>_N=\left<p_i,p_j\right>=\delta_{ij}$, hence the discrete inner product \eqref{equ:discreteinnerproduct} also satisfies
\begin{equation}\label{equ:delta2}
\left<p_i,p_j\right>_N=\delta_{ij},\quad1\leq i,j\leq d.
\end{equation}
It follows the least number of quadrature points such that the quadrature \eqref{equ:quadrature} is exact:
\begin{lemma}[Lemma 2 in \cite{sloan1995polynomial}]\label{lem:leastnumber}
If a quadrature rule is exact for all polynomials of degree at most $2L$, the number of quadrature points $N$ should satisfy $N\geq \dim\mathbb{P}_L=d$.
\end{lemma}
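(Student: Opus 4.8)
The plan is to argue by contradiction, exploiting the fact that the quadrature reproduces the $L_2$ inner product on $\mathbb{P}_L$ precisely because a product of two degree-$L$ polynomials has degree at most $2L$. Suppose $N<d$. I would first consider the evaluation map $E\colon\mathbb{P}_L\to\mathbb{R}^N$ sending $g\mapsto(g(\mathbf{x}_1),\ldots,g(\mathbf{x}_N))$. Since $\dim\mathbb{P}_L=d>N=\dim\mathbb{R}^N$, this linear map cannot be injective, so its kernel is nontrivial; I pick a nonzero $g\in\mathbb{P}_L$ with $g(\mathbf{x}_j)=0$ for every $j$.

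The next step is to feed $g^2$ into the quadrature. Because $g\in\mathbb{P}_L$, we have $g^2\in\mathbb{P}_{2L}$, so the exactness assumption \eqref{equ:exactquadrature} applies and gives
\[
\int_{\Omega}g^2\,\mathrm{d}\omega=\sum_{j=1}^N w_j\,g(\mathbf{x}_j)^2=0,
\]
the right-hand side vanishing term by term since $g$ is annihilated at all nodes. On the other hand, writing $g=\sum_{\ell=1}^d c_\ell p_\ell$ with not all $c_\ell$ zero and using the orthonormality \eqref{equ:delta1}, we get $\int_{\Omega}g^2\,\mathrm{d}\omega=\langle g,g\rangle=\sum_{\ell=1}^d c_\ell^2>0$. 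This contradicts the vanishing of the integral, so the assumption $N<d$ is untenable and $N\geq d$ follows.

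The steps here are all routine; the only thing that requires a moment's thought — the would-be main obstacle — is recognizing that one should square the kernel element rather than work with $g$ directly. Working with $g$ alone uses only exactness up to degree $L$ and produces no contradiction, whereas passing to $g^2$ brings in the full strength of degree-$2L$ exactness together with the positive-definiteness of the $L_2$ inner product. Equivalently, I could phrase the argument in matrix form: relation \eqref{equ:delta2} says that the $N\times d$ matrix $A$ with entries $A_{j\ell}=\sqrt{w_j}\,p_\ell(\mathbf{x}_j)$ satisfies $A^{\top}A=I_d$, whence $d=\operatorname{rank}(A^{\top}A)\leq\operatorname{rank}(A)\leq\min(N,d)$, forcing $N\geq d$. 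Notably, this bound needs only exactness on $\mathbb{P}_{2L}$; the positivity of the weights, although assumed throughout the paper, is not required for this particular conclusion.
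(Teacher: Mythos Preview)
Your argument is correct. The paper itself does not supply a proof of this lemma; it is simply quoted from Sloan's original hyperinterpolation paper \cite{sloan1995polynomial}. The proof there is precisely your first argument: if $N<d$ one finds a nonzero $g\in\mathbb{P}_L$ vanishing on all nodes, and then exactness applied to $g^2\in\mathbb{P}_{2L}$ forces $\int_\Omega g^2\,\mathrm{d}\omega=0$, contradicting $g\neq0$. So your approach coincides with the source the paper cites.

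Your matrix reformulation via $A^{\top}A=I_d$ is a clean equivalent, and your closing remark that positivity of the weights is unnecessary here is accurate and worth noting: only the exactness on $\mathbb{P}_{2L}$ and the positive-definiteness of the continuous inner product are used.
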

\begin{definition}\label{def:minimal}
An $N$-point quadrature rule which is exact for all polynomials of degree at most $2L$ is called \emph{minimal} if $N=d$.
\end{definition}
There are two important and practical properties of $\mathcal{L}_L$, one is that it has the classical interpolation property if and only if the quadrature is minimal, the other is that it becomes exact if $f$ is a polynomial in $\mathbb{P}_L$. 
\begin{lemma}[Lemma 3 in \cite{sloan1995polynomial}]\label{lem:lemma3}
The classical interpolation formula 
\begin{equation}
\mathcal{L}_Lf(\mathbf{x}_j)=f(\mathbf{x}_j),\quad 1\leq j\leq N,
\end{equation}
holds for arbitrary $f\in\mathcal{C}(\Omega)$ if and only if the quadrature rule is minimal.
\end{lemma}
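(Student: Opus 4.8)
The plan is to recast the interpolation condition as a single matrix identity and then read off the equivalence from a rank count. First I would introduce the $N \times d$ sampling matrix $A$ with entries $A_{k\ell} := \sqrt{w_k}\,p_\ell(\mathbf{x}_k)$; since the weights are positive, the diagonal matrix $\sqrt{W} := \diag(\sqrt{w_1},\ldots,\sqrt{w_N})$ is invertible, a fact I will use throughout. The discrete orthonormality relation \eqref{equ:delta2} says exactly that $A$ has orthonormal columns, i.e.\ $A^{\mathsf{T}}A = I_d$.

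Next I would evaluate $\mathcal{L}_Lf$ at a node $\mathbf{x}_j$ by inserting the discrete inner product \eqref{equ:discreteinnerproduct}, interchanging the two finite sums, and rewriting $w_k p_\ell(\mathbf{x}_k)p_\ell(\mathbf{x}_j)$ in terms of the entries of $A$. Writing $\mathbf{f} := (f(\mathbf{x}_1),\ldots,f(\mathbf{x}_N))^{\mathsf{T}}$ for the vector of sampled values, this computation yields the compact identity $(\mathcal{L}_Lf(\mathbf{x}_j))_{j=1}^N = \sqrt{W}^{-1}AA^{\mathsf{T}}\sqrt{W}\,\mathbf{f}$. Hence the interpolation formula $\mathcal{L}_Lf(\mathbf{x}_j)=f(\mathbf{x}_j)$ for all $j$ is, for a fixed $f$, equivalent to $\sqrt{W}^{-1}AA^{\mathsf{T}}\sqrt{W}\,\mathbf{f}=\mathbf{f}$.

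The crucial observation is that $\mathbf{f}$ ranges over all of $\mathbb{R}^N$ as $f$ ranges over $\mathcal{C}(\Omega)$: because the nodes $\mathbf{x}_1,\ldots,\mathbf{x}_N$ are distinct, any prescribed nodal data can be realized by some continuous function (for instance via a partition-of-unity or bump-function construction on $\Omega$). Thus the interpolation property for \emph{arbitrary} $f$ is equivalent to the matrix identity $\sqrt{W}^{-1}AA^{\mathsf{T}}\sqrt{W}=I_N$, and since $\sqrt{W}$ is invertible this is the same as $AA^{\mathsf{T}}=I_N$.

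Finally I would close with a rank argument. Because $A^{\mathsf{T}}A=I_d$, the columns of $A$ are linearly independent, so $\operatorname{rank}(A)=d$; consequently $AA^{\mathsf{T}}$ is a symmetric idempotent, that is, an orthogonal projection of rank $d$ acting on $\mathbb{R}^N$. Therefore $AA^{\mathsf{T}}=I_N$ holds precisely when $d=N$, which by Definition \ref{def:minimal} is exactly the minimality of the quadrature rule; this settles both implications at once. I expect the main obstacle to be not the rank computation, which is routine once the matrix identity is available, but the careful bookkeeping in the interchange of sums leading to that identity, together with the justification that every data vector in $\mathbb{R}^N$ is attained by a continuous function.
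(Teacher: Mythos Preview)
Your argument is correct. The matrix reformulation is clean: writing $A_{k\ell}=\sqrt{w_k}\,p_\ell(\mathbf{x}_k)$, the discrete orthonormality \eqref{equ:delta2} gives $A^{\mathsf T}A=I_d$, and the nodal evaluation of $\mathcal{L}_Lf$ becomes $\sqrt{W}^{-1}AA^{\mathsf T}\sqrt{W}\,\mathbf{f}$, so the interpolation property for arbitrary $f$ is equivalent to $AA^{\mathsf T}=I_N$. Since $AA^{\mathsf T}$ is an orthogonal projection of rank $d$, this forces $d=N$, and conversely $d=N$ together with $A^{\mathsf T}A=I_d$ makes $A$ orthogonal. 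The justification that every data vector in $\mathbb{R}^N$ arises from some $f\in\mathcal{C}(\Omega)$ is unproblematic on a compact manifold with distinct nodes.

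As for comparison: the present paper does not supply its own proof of this lemma; it is quoted verbatim as Lemma~3 of \cite{sloan1995polynomial} and left unproved here. Sloan's original argument proceeds slightly differently, working with the discrete seminorm $\langle\cdot,\cdot\rangle_N$ directly rather than with the weighted matrix $A$: one observes that $\langle f-\mathcal{L}_Lf,\,p\rangle_N=0$ for every $p\in\mathbb{P}_L$, so $\mathcal{L}_Lf(\mathbf{x}_j)=f(\mathbf{x}_j)$ for all $j$ is equivalent to $f-\mathcal{L}_Lf$ lying in the null space of the discrete form, and this null space is trivial on $\mathbb{R}^N$ precisely when the $d$ sampled basis vectors span $\mathbb{R}^N$, i.e.\ when $N=d$. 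Your matrix route packages the same linear algebra more explicitly and makes the rank obstruction transparent; the two arguments are equivalent in content.
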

\begin{lemma}[Lemma 4 in \cite{sloan1995polynomial}]\label{lem:lemma4}
If $f\in\mathbb{P}_L$ then $\mathcal{L}_Lf=f$.
\end{lemma}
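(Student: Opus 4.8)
The plan is to exploit the exactness of the quadrature rule up to degree $2L$, which is precisely the feature that makes hyperinterpolation reproduce polynomials of degree at most $L$. Since $f\in\mathbb{P}_L$ and every basis element $p_\ell\in\mathbb{P}_L$, each product $fp_\ell$ lies in $\mathbb{P}_{2L}$, so the rule \eqref{equ:exactquadrature} integrates it exactly. This single observation drives the whole argument.

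First I would expand $f$ in the orthonormal basis. Because $f\in\mathbb{P}_L$, write $f=\sum_{k=1}^{d}c_kp_k$ for suitable coefficients $c_k$. Then, using the linearity of the discrete inner product \eqref{equ:discreteinnerproduct} together with the orthonormality relation \eqref{equ:delta2} (which is itself a consequence of the quadrature exactness, as noted just above the statement), I would compute the hyperinterpolation coefficients
\[
\left<f,p_\ell\right>_N=\sum_{k=1}^{d}c_k\left<p_k,p_\ell\right>_N=\sum_{k=1}^{d}c_k\delta_{k\ell}=c_\ell.
\]
Substituting these back into the definition \eqref{equ:hyperinterpolation} gives $\mathcal{L}_Lf=\sum_{\ell=1}^{d}c_\ell p_\ell=f$, which is the claim.

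An equivalent and perhaps more transparent route is to show that the discrete and continuous inner products coincide on $\mathbb{P}_L$: for any $f\in\mathbb{P}_L$ the product $fp_\ell\in\mathbb{P}_{2L}$, so \eqref{equ:exactquadrature} yields $\left<f,p_\ell\right>_N=\left<f,p_\ell\right>$ for every $\ell$. Hence $\mathcal{L}_Lf=\sum_{\ell=1}^{d}\left<f,p_\ell\right>p_\ell=\mathcal{P}_Lf$, and since $\mathcal{P}_L$ is the $L_2$ orthogonal projection onto $\mathbb{P}_L$ it fixes the polynomial $f$. I expect the only point needing genuine care to be the degree bookkeeping—verifying that $fp_\ell$ never exceeds degree $2L$—which is exactly the hypothesis imposed on the quadrature. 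Beyond that there is no real obstacle; the lemma is essentially immediate from the orthonormality \eqref{equ:delta2} inherited from the exactness of the rule.
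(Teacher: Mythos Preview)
The paper does not supply its own proof of this lemma; it merely quotes the result from Sloan's original work. Your argument is correct and is exactly the standard one: since $f\in\mathbb{P}_L$ and $p_\ell\in\mathbb{P}_L$, the product $fp_\ell\in\mathbb{P}_{2L}$, so the quadrature exactness \eqref{equ:exactquadrature} forces $\left<f,p_\ell\right>_N=\left<f,p_\ell\right>$, and then orthonormality (either via \eqref{equ:delta2} or via the projection $\mathcal{P}_L$) finishes the job. There is nothing to add.
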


\subsection{Filtered hyperinterpolation}\label{sec:filtered}
Filtered hyperinterpolation \cite{sloan2011polynomial,sloan2000constructive,sloan2012filtered,wang2017filtered}, roughly speaking, makes use of a ``filter'' function $h\in\mathcal{C}(\mathbb{R}^+)$: all coefficients $\left<f,p_{\ell}\right>_N$ are filtered by $h$, that is, they are multiplied by $h({\deg p_{\ell}}/L)$, where $\deg p_{\ell}$ is the degree of $p_{\ell}$. A filter function $h$ satisfies
\begin{equation*}
h(x)=\begin{cases}
1 &\text{ for }x\in[0,1/2],\\
0&\text{ for }x\in[1,\infty),
\end{cases}
\end{equation*}
and $h$ on $[1/2,1]$ has various definitions (see, e.g. \cite{an2012regularized,sloan2011polynomial}) with the continuity of $h$ been ensured. A trigonometric polynomial filter \cite{an2012regularized} is used for numerical study in this paper, which defines $h(x)=\sin^2\pi x$ on $[1/2,1]$.
\begin{definition}
Given a quadrature rule \eqref{equ:quadrature} with exactness \eqref{equ:exactquadrature}. A \emph{filtered hyperinterpolation} of $f$ onto $\mathbb{P}_{L-1}$ is defined as
\begin{equation}\label{equ:filteredhyperinterpolation}
\mathcal{F}_Lf=\sum_{\ell=1}^{d}h\left(\frac{\deg p_{\ell}}{L}\right)\left<f,p_{\ell}\right>_Np_{\ell}.
\end{equation}
\end{definition}
From \eqref{equ:filteredhyperinterpolation}, we have $\mathcal{F}_Lf=f$ for all $f\in\mathbb{P}_{\lfloor L/2\rfloor}$, 
where $\lfloor\cdot\rfloor$ is the floor function. Recently, it was shown in \cite{lin2019distributed} that (distributed) filtered hyperinterpolation can reduce weak noise on spherical functions. In this paper, we will compare the ability of denoising between filtered hyperinterpolation and the following Lasso hyperinterpolation, see examples in Section \ref{sec:example}.

\section{Lasso hyperinterpolation}\label{sec:lassoandsparsity}
Lasso mainly aims at denoising and feature selection, and it has always been investigated in a discrete way in statistics, optimization, compressed sensing, and so forth \cite{tibshirani1996regression,tibshirani2011regression}. Thus it is natural, and feasible as well, to introduce Lasso into hyperinterpolation to handle noisy data, and to simplify the hyperinterpolation polynomial by dismissing basis elements of less relevance to concerned function $f$. 

\subsection{Formulation}
To introduce Lasso hyperinterpolation, we first reveal that $\mathcal{L}_Lf$ is a solution to a least squares approximation problem, which was first stated by Sloan in 1995 \cite{sloan1995polynomial}. For the sake of completeness, we give a proof of this remarkable result, which is stated in Lemma \ref{prop:hyperinterpolation}. Consider the following discrete least squares approximation problem 
\begin{equation}\label{equ:approximationproblem}
\min\limits_{p\in\mathbb{P}_L}~~\left\{\frac12\sum_{j=1}^Nw_j\left(p(\mathbf{x}_j)-f(\mathbf{x}_j)\right)^2\right\}\quad\text{with}\quad p(\mathbf{x})=\sum_{\ell=1}^{d}\alpha_{\ell}p_{\ell}(\mathbf{x})\in\mathbb{P}_L.
\end{equation}
Let $\mathbf{A}\in\mathbb{R}^{sN\times d}$ be a matrix with elements $[\mathbf{A}]_{j\ell}=p_{\ell}(\mathbf{x}_j)$, $j=1,\ldots,N$ and $\ell=1,\ldots,d$ (recall $\mathbf{x}_j\in\mathbb{R}^s$), and let $\mathbf{W}$ be a diagonal matrix with entries $\{w_{j}\}_{j=1}^N$. The approximation problem \eqref{equ:approximationproblem} can be transformed into an equivalent approximation problem
\begin{equation}\label{equ:optimizationproblem}
\min\limits_{\bm{\alpha}\in\mathbb{R}^{d}}~~\frac12\|\mathbf{W}^{1/2}(\mathbf{A}\bm{\alpha}-\mathbf{f})\|_2^2,
\end{equation}
where $\bm{\alpha}=[\alpha_1,\ldots,\alpha_d]^{\text{T}}\in\mathbb{R}^d$ is a collection of coefficients $\{\alpha_{\ell}\}_{\ell=1}^{d}$ in constructing $p$, and $\mathbf{f}=[f(\mathbf{x}_1),\ldots,f(\mathbf{x}_N)]^{\text{T}}\in\mathbb{R}^N$ is a vector of sampling values $\{f(\mathbf{x}_j)\}_{j=1}^N$ on $\mathcal{X}_N$. Since problem \eqref{equ:optimizationproblem} is a strictly convex problem, the stationary point of the objective is none other than the unique solution to \eqref{equ:optimizationproblem}. Taking the first derivative of the objective in problem \eqref{equ:optimizationproblem} with respect to $\bm{\alpha}$ leads to the first-order condition
\begin{equation}\label{equ:firstorder}
\mathbf{A}^{\text{T}}\mathbf{WA}\bm{\alpha}-\mathbf{A}^{\text{T}}\mathbf{Wf}=\mathbf{0}.
\end{equation}
Note that the assumption $f\neq0$ implies $\|\mathbf{A}^{\rm{T}}\mathbf{Wf}\|_{\infty}\neq0$. With the first-order condition \eqref{equ:firstorder}, we have the following result. 
\begin{lemma}[Lemma 5 in \cite{sloan1995polynomial}]\label{prop:hyperinterpolation}
Given $f\in\mathcal{C}(\Omega)$, let $\mathcal{L}_Lf\in\mathbb{P}_L$ be defined by \eqref{equ:hyperinterpolation}, where the quadrature points (all in $\Omega$) and weights (all positive) in the discrete inner product satisfies the exactness property \eqref{equ:exactquadrature}. Then $\mathcal{L}_Lf$ is the unique solution to the approximation problem \eqref{equ:approximationproblem}.
\end{lemma}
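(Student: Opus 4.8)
The plan is to verify that the hyperinterpolation coefficients $\langle f,p_\ell\rangle_N$ satisfy the first-order condition \eqref{equ:firstorder}, and then to invoke strict convexity to upgrade this to uniqueness. First I would rewrite the two matrix objects appearing in \eqref{equ:firstorder} in terms of the discrete inner product \eqref{equ:discreteinnerproduct}. Directly from the definitions $[\mathbf{A}]_{j\ell}=p_\ell(\mathbf{x}_j)$ and $\mathbf{W}=\diag(w_1,\ldots,w_N)$, the $(i,k)$ entry of $\mathbf{A}^{\text{T}}\mathbf{W}\mathbf{A}$ is $\sum_{j=1}^N w_j p_i(\mathbf{x}_j)p_k(\mathbf{x}_j)=\langle p_i,p_k\rangle_N$, while the $i$th entry of $\mathbf{A}^{\text{T}}\mathbf{W}\mathbf{f}$ is $\sum_{j=1}^N w_j p_i(\mathbf{x}_j)f(\mathbf{x}_j)=\langle f,p_i\rangle_N$.

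The key step is the orthonormality relation \eqref{equ:delta2}: since every product $p_ip_k$ lies in $\mathbb{P}_{2L}$, the exactness \eqref{equ:exactquadrature} gives $\langle p_i,p_k\rangle_N=\langle p_i,p_k\rangle=\delta_{ik}$. Hence $\mathbf{A}^{\text{T}}\mathbf{W}\mathbf{A}=\mathbf{I}_d$, the $d\times d$ identity matrix, and the first-order condition \eqref{equ:firstorder} collapses to $\bm{\alpha}=\mathbf{A}^{\text{T}}\mathbf{W}\mathbf{f}$. Reading off the entries, this says $\alpha_\ell=\langle f,p_\ell\rangle_N$ for $1\le\ell\le d$, which are exactly the coefficients defining $\mathcal{L}_Lf$ in \eqref{equ:hyperinterpolation}. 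Thus $\mathcal{L}_Lf$ is a stationary point of the objective in \eqref{equ:optimizationproblem}, and therefore of \eqref{equ:approximationproblem}.

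Finally I would settle uniqueness. The Hessian of the objective in \eqref{equ:optimizationproblem} is precisely $\mathbf{A}^{\text{T}}\mathbf{W}\mathbf{A}=\mathbf{I}_d$, which is positive definite; consequently the objective is strictly convex and its unique stationary point is the unique global minimizer. This identifies $\mathcal{L}_Lf$ as the unique solution to \eqref{equ:approximationproblem}. I do not expect a genuine obstacle here: the whole argument hinges on the single observation that the quadrature exactness on $\mathbb{P}_{2L}$ renders $\mathbf{A}^{\text{T}}\mathbf{W}\mathbf{A}$ equal to the identity, after which the normal equations trivialize. The only point deserving explicit care is confirming that $\deg(p_ip_k)\le 2L$, so that \eqref{equ:delta2} is legitimately available; everything else is the routine computation of the normal equations and their solution.
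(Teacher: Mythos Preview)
Your proposal is correct and follows essentially the same approach as the paper: both compute the entries of $\mathbf{A}^{\text{T}}\mathbf{W}\mathbf{A}$ and $\mathbf{A}^{\text{T}}\mathbf{W}\mathbf{f}$ via the discrete inner product, invoke \eqref{equ:delta2} to reduce $\mathbf{A}^{\text{T}}\mathbf{W}\mathbf{A}$ to the identity, read off $\alpha_\ell=\langle f,p_\ell\rangle_N$ from \eqref{equ:firstorder}, and appeal to strict convexity for uniqueness. Your added remark that the Hessian equals $\mathbf{I}_d$ is a slightly more explicit justification of the strict convexity the paper asserts, but the argument is otherwise identical.
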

\begin{proof}
The proof is based on the first-order condition \eqref{equ:firstorder}. On the one hand, the matrix $\mathbf{A}^{\text{T}}\mathbf{WA}$ is an identity matrix as all entries of it satisfy 
\begin{equation*}
[\mathbf{A}^{\text{T}}\mathbf{WA}]_{ik}=\sum_{j=1}^Nw_jp_i(\mathbf{x}_j)p_k(\mathbf{x}_j)=\left<p_i,p_k\right>_N=\delta_{ik},\quad 1\leq i,k\leq d,
\end{equation*}
where the last equality holds due to property \eqref{equ:delta2}. On the other hand, the vector $\mathbf{A}^{\text{T}}\mathbf{Wf}$ is in fact a collection of discrete inner products:
\begin{equation*}
[\mathbf{A}^{\text{T}}\mathbf{Wf}]_{\ell}=\sum_{j=1}^Nw_jp_{\ell}(\mathbf{x}_j)f(\mathbf{x}_j)=\left<p_{\ell},f\right>_N,\quad \ell=1,\ldots,d.
\end{equation*}
Hence the polynomial constructed with coefficients $\alpha_{\ell}=\left<p_{\ell},f\right>_N$ is indeed $\mathcal{L}_Lf$. The uniqueness is due to the strict convexity of problem \eqref{equ:optimizationproblem}.
\end{proof}

Now we start to involve Lasso into $\mathcal{L}_L$. From the original idea of Lasso \cite{tibshirani1996regression}, it restricts the sum of absolute values of coefficients to be bounded by some positive number, say $\eta$. Then for $p=\sum_{\ell=1}^{d}\alpha_{\ell}p_{\ell}$, incorporating Lasso into $\mathcal{L}_L$ can be achieved via solving the constrained least squares problem
\begin{equation}\label{equ:lassoapproximationconstrained}
\min\limits_{p\in\mathbb{P}_L}~~\left\{\frac12\sum_{j=1}^Nw_j\left(p(\mathbf{x}_j)-f(\mathbf{x}_j)\right)^2\right\}\quad\text{subject to}\quad\sum_{\ell=1}^{d}|\alpha_{\ell}|\leq\eta.
\end{equation}
Solving this problem is equivalent to solving the following regularized least squares approximation problem
\begin{equation}\label{equ:lassoapproximation}
\min\limits_{p\in\mathbb{P}_L}~~\left\{\frac12\sum_{j=1}^Nw_j\left(p(\mathbf{x}_j)-f(\mathbf{x}_j)\right)^2+\lambda\sum_{\ell=1}^{d}|\mu_{\ell}\beta_{\ell}|\right\}\quad\text{with}\quad p=\sum_{\ell=1}^{d}\beta_{\ell}p_{\ell}\in\mathbb{P}_L,
\end{equation}
where $\lambda>0$ is the \emph{regularization parameter} and $\{\mu_{\ell}\}_{\ell=1}^{d}$ is a set of positive \emph{penalty parameters}. We make two comments on problem \eqref{equ:lassoapproximation}: we adopt new notation $\beta_{\ell}$ instead of using $\alpha_{\ell}$ in order to distinguish the coefficients of Lasso hyperinterpolation from those of $\mathcal{L}_L$; and we introduce a sequence $\{\mu_{\ell}\}_{\ell=1}^{d}$ of penalty parameters into the model so that the model could be more general and more flexible. The solution to problem \eqref{equ:lassoapproximationconstrained} is also a solution to problem \eqref{equ:lassoapproximation} when $\mu_{\ell}=1$ for all $\ell=1,\ldots,d$.

The solution to problem \eqref{equ:lassoapproximation} is our \emph{Lasso hyperinterpolation polynomial}. We directly give the definition of Lasso hyperinterpolation first, and then show that the Lasso hyperinterpolation polynomial is indeed the solution to \eqref{equ:lassoapproximation}. To describe it, we need the notion of \emph{soft threshold operator}.
\begin{definition}\label{def:sto}
The \emph{soft threshold operator}, denoted by $\mathcal{S}_{k}(a)$, is defined as $\mathcal{S}_k(a):=\max(0,a-k)+\min(0,a+k)$.
\end{definition}
We add $\lambda$ as a superscript into $\mathcal{L}_Lf$, denoting that this is a regularized version (Lasso regularized) of $\mathcal{L}_L$ with regularization parameter $\lambda$.
\begin{definition}\label{def:lassohyperinterpolation}
Given a quadrature rule \eqref{equ:quadrature} with exactness \eqref{equ:exactquadrature}. A \emph{Lasso hyperinterpolation} of $f$ onto $\mathbb{P}_L$ is defined as
\begin{equation}\label{equ:lassohyperinterpolation}
\mathcal{L}_L^{\lambda}f:=\sum_{\ell=1}^{d}\mathcal{S}_{\lambda\mu_{\ell}}\left(\left<f,p_{\ell}\right>_N\right)p_{\ell},\quad \lambda>0.
\end{equation}
\end{definition}
The logic of Lasso hyperinterpolation is to process each coefficient $\left<f,p_{\ell}\right>_N$ of hyperinterpolation by a soft threshold operator $\mathcal{S}_{\lambda\mu_{\ell}}(\cdot)$. Then we revisit problem \eqref{equ:lassoapproximation}. 

Let $\mathbf{R}\in\mathbb{R}^{d\times d}$ be a diagonal matrix with entries $\{\mu_{\ell}\}_{\ell=1}^{d}$. Similar to the process from \eqref{equ:approximationproblem} to \eqref{equ:optimizationproblem}, problem \eqref{equ:lassoapproximation} can also be transformed into 
\begin{equation}\label{equ:lassooptimization}
\min\limits_{\bm{\beta}\in\mathbb{R}^{d}}~~\frac12\|\mathbf{W}^{1/2}(\mathbf{A}\bm{\beta}-\mathbf{f})\|_2^2+\lambda\|\mathbf{R}\bm{\beta}\|_1,\quad \lambda>0,
\end{equation}
where $\bm{\beta}=[\beta_1,\ldots,\beta_{d}]^{\text{T}}\in\mathbb{R}^{d}$. As the convex term $\|\cdot\|_1$ is nonsmooth, some subdifferential calculus of convex functions \cite{MR2511061} is needed. Then we have the following result.
\begin{theorem}\label{thm:lassohyperinterpolation}
Let $\mathcal{L}_L^{\lambda}f\in\mathbb{P}_L$ be defined by \eqref{equ:lassohyperinterpolation} and adopt conditions of Lemma \ref{prop:hyperinterpolation}. Then $\mathcal{L}_L^{\lambda}f$ is the solution to the regularized least squares approximation problem \eqref{equ:lassoapproximation}.
\end{theorem}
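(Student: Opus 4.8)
The plan is to exploit the fact that, thanks to the quadrature exactness encoded in property \eqref{equ:delta2}, the matrix $\mathbf{A}^{\text{T}}\mathbf{W}\mathbf{A}$ reduces to the identity, so that problem \eqref{equ:lassooptimization} decouples completely across the coordinates $\beta_\ell$. First I would expand the smooth quadratic term exactly as in the proof of Lemma \ref{prop:hyperinterpolation}: writing the objective as $\frac12\bm{\beta}^{\text{T}}(\mathbf{A}^{\text{T}}\mathbf{W}\mathbf{A})\bm{\beta}-\bm{\beta}^{\text{T}}\mathbf{A}^{\text{T}}\mathbf{W}\mathbf{f}+\frac12\mathbf{f}^{\text{T}}\mathbf{W}\mathbf{f}+\lambda\|\mathbf{R}\bm{\beta}\|_1$ and substituting $\mathbf{A}^{\text{T}}\mathbf{W}\mathbf{A}=\mathbf{I}$ together with $[\mathbf{A}^{\text{T}}\mathbf{W}\mathbf{f}]_\ell=\left<f,p_\ell\right>_N$, the objective becomes (up to the constant $\frac12\mathbf{f}^{\text{T}}\mathbf{W}\mathbf{f}$) a sum of $d$ independent univariate terms of the form $\frac12\beta_\ell^2-\left<f,p_\ell\right>_N\beta_\ell+\lambda\mu_\ell|\beta_\ell|$.

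Since the problem is separable, the minimizer can be found coordinate by coordinate, and each scalar problem is the canonical one-dimensional Lasso problem $\min_{\beta}\{\frac12(\beta-c)^2+k|\beta|\}$ with $c=\left<f,p_\ell\right>_N$ and $k=\lambda\mu_\ell>0$. The next step is to characterize this scalar minimizer. Because $|\cdot|$ is nonsmooth at the origin, I would use the subdifferential optimality condition: $\bm{\beta}^\ast$ is optimal if and only if $\mathbf{0}$ lies in the subdifferential of the full objective at $\bm{\beta}^\ast$, which coordinate-wise reads $0\in\beta_\ell-\left<f,p_\ell\right>_N+\lambda\mu_\ell\,\partial|\beta_\ell|$, where $\partial|\beta|=\{\operatorname{sign}(\beta)\}$ for $\beta\neq0$ and $\partial|0|=[-1,1]$. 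Checking the three cases $\beta_\ell>0$, $\beta_\ell<0$, and $\beta_\ell=0$ shows that the unique solution is exactly $\beta_\ell^\ast=\mathcal{S}_{\lambda\mu_\ell}(\left<f,p_\ell\right>_N)$ as defined in Definition \ref{def:sto}, since the soft threshold operator keeps $\left<f,p_\ell\right>_N$ shrunken by $\lambda\mu_\ell$ when its magnitude exceeds the threshold and sets it to zero otherwise. Assembling these coordinates gives precisely the polynomial \eqref{equ:lassohyperinterpolation}, namely $\mathcal{L}_L^\lambda f=\sum_{\ell=1}^d\mathcal{S}_{\lambda\mu_\ell}(\left<f,p_\ell\right>_N)p_\ell$.

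Uniqueness follows from strict convexity: the quadratic part has Hessian $\mathbf{A}^{\text{T}}\mathbf{W}\mathbf{A}=\mathbf{I}\succ0$, so the objective is strictly convex, and the global minimizer is unique; the separable structure already gives each coordinate uniquely.

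The one genuine subtlety, rather than the algebra, is justifying the use of subdifferential calculus correctly: I must invoke the sum rule for subdifferentials of a smooth convex function plus the nonsmooth convex $\ell_1$ term (valid here because the smooth part is everywhere differentiable, so no constraint-qualification issue arises), and I must verify that stationarity in the subdifferential sense is both necessary and sufficient for a global minimum of a convex function. This is where I would cite the convex-analysis reference \cite{MR2511061} already flagged in the text. Once that machinery is in place, the coordinate-wise case analysis is routine, so the main conceptual step is recognizing and exploiting the decoupling induced by $\mathbf{A}^{\text{T}}\mathbf{W}\mathbf{A}=\mathbf{I}$, which is what turns a $d$-dimensional nonsmooth optimization into $d$ trivial scalar soft-thresholding problems.
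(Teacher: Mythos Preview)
Your proposal is correct and follows essentially the same approach as the paper: both arguments rely on the identity $\mathbf{A}^{\text{T}}\mathbf{W}\mathbf{A}=\mathbf{I}$ (inherited from Lemma~\ref{prop:hyperinterpolation}) to reduce the first-order subdifferential condition to the coordinate-wise inclusions $0\in\beta_\ell-\left<f,p_\ell\right>_N+\lambda\mu_\ell\,\partial|\beta_\ell|$, and then resolve each by the same three-case analysis yielding the soft-threshold formula. The only cosmetic differences are that you expand the objective to exhibit the separability explicitly before invoking optimality, whereas the paper writes the vector subdifferential condition \eqref{equ:lassofirstorder} first and then separates; you also add an explicit remark on uniqueness via strict convexity, which the paper leaves implicit.
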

\begin{proof}
In Lemma \ref{prop:hyperinterpolation} we have proved that $\mathbf{A}^{\text{T}}\mathbf{WA}$ is an identity matrix, and by \eqref{equ:firstorder} we have $\bm{\alpha}=\mathbf{A}^{\text{T}}\mathbf{Wf}$. Then $\bm{\beta}=[\beta_1,\ldots,\beta_{d}]^{\text{T}}$ is a solution to \eqref{equ:lassooptimization} if and only if
\begin{equation}\label{equ:lassofirstorder}
\mathbf{0}\in\mathbf{A}^{\text{T}}\mathbf{WA}\bm{\beta}-\mathbf{A}^{\text{T}}\mathbf{Wf}+\partial\left(\|\mathbf{R}\bm{\beta}\|_1\right)=
\bm{\beta}-\bm{\alpha}+\partial\left(\|\mathbf{R}\bm{\beta}\|_1\right),
\end{equation}
where $\partial(\cdot)$ denotes the subdifferential. The first order condition \eqref{equ:lassofirstorder} is equivalent to
\begin{equation}\label{equ:1storder}
0\in\beta_{\ell}-\alpha_{\ell}+\lambda\mu_{\ell}\partial(|\beta_{\ell}|)\quad\forall\ell=1,\ldots,{d},
\end{equation}
where
\begin{equation*}
\partial(|\beta_{\ell}|)=
\begin{cases}
1 & \text{ if }\beta_{\ell}>0,\\
-1 & \text{ if }\beta_{\ell}<0,\\
\in[-1,1] & \text{ if }\beta_{\ell}=0.
\end{cases}
\end{equation*}
If we denote by $\bm{\beta}^{*}=[\beta_1^*,\ldots,\beta_{d}^*]^{\text{T}}$ the optimal solution to \eqref{equ:lassooptimization}, then
\begin{equation*}
\beta_{\ell}^*=\alpha_{\ell}-\lambda\mu_{\ell}\partial(|\beta_{\ell}^*|),\quad\ell=1,\ldots,d.
\end{equation*}
Thus there are three cases should be considered:
\begin{enumerate}
  \item[1)] If $\alpha_{\ell}>\lambda\mu_{\ell}$, then $\alpha_{\ell}-\lambda\mu_{\ell}\partial(|\beta_{\ell}^*|)>0$, thus $\beta_{\ell}^*>0$, yielding $\partial(|\beta_{\ell}^*|)=1$, then $\beta_{\ell}^*=(\alpha_{\ell}-\lambda\mu_{\ell})>0$.
  \item[2)] If $\alpha_{\ell}<-\lambda\mu_{\ell}$, then $\alpha_{\ell}+\lambda\mu_{\ell}\partial(|\beta_{\ell}^*|)<0$, which leads to $\beta_{\ell}^*<0$, giving $\partial(|\beta_{\ell}^*|)=-1$, then $\beta_{\ell}^*=(\alpha_{\ell}+\lambda\mu_{\ell})<0$.
  \item[3)] Consider now $-\lambda\mu_{\ell}\leq\alpha_{\ell}\leq\lambda\mu_{\ell}$. On the one hand, $\beta_{\ell}^*>0$ leads to $\partial(|\beta_{\ell}^*|)=1$, and then $\beta_{\ell}\leq0$; on the other hand, $\beta_{\ell}^*<0$ leads to $\partial(|\beta_{\ell}^*|)=-1$, and then $\beta_{\ell}\geq0$. Two contradictions enforce $\beta_{\ell}^*$ to be $0$.
\end{enumerate}
Recall that $\alpha_{\ell}=\left<f,p_{\ell}\right>_{N}$ for all $\ell=1,\ldots,d$. With all cases considered, the polynomial constructed with coefficients $\beta_{\ell}=\mathcal{S}_{\lambda\mu_{\ell}}(\alpha_{\ell})$, $\ell=1,\ldots,d$ is indeed $\mathcal{L}_L^{\lambda}f$. 
\end{proof}

There are three important facts of Lasso hyperinterpolation, which distinguish it from hyperinterpolation. 
\begin{remark}
Note that $\mathcal{S}_{\lambda\mu_{\ell}}(\alpha_{\ell})\neq \alpha_{\ell}$. Thus even though the quadrature rule is minimal, Lasso hyperinterpolation does not satisfy the classical interpolation property, i.e., $\mathcal{L}_L^{\lambda}f(\mathbf{x}_j)\neq f(\mathbf{x}_j),1\leq j\leq N$. But by Lemma \ref{lem:lemma3}, $\mathcal{L}_Lf(\mathbf{x}_j)=f(\mathbf{x}_j),1\leq j\leq N$.
\end{remark}
\begin{remark}
The fact $\mathcal{S}_{\lambda\mu_{\ell}}(\alpha_{\ell})\neq \alpha_{\ell}$ also implies that $\mathcal{L}^{\lambda}_Lp\neq p$ for all $p\in\mathbb{P}_L$. Hence $\mathcal{L}_L^{\lambda}$ is not a projection operator, as $\mathcal{L}^{\lambda}_L(\mathcal{L}^{\lambda}_Lf)\neq\mathcal{L}^{\lambda}_Lf$ for any nonzero $f\in\mathcal{C}(\Omega)$. However, by Lemma \ref{lem:lemma4} we have $\mathcal{L}_Lp= p$ and hence $\mathcal{L}_L(\mathcal{L}_Lf)=\mathcal{L}_Lf$ for all $f\in\mathcal{C}(\Omega)$.
\end{remark}
\begin{remark}
Lasso hyperinterpolation is not invariant under a change of basis. That is, suppose $\{q_1,q_2,\ldots,q_d\}$ is another basis, due to the existence of the soft threshold operator, $\mathcal{L}_L^{\lambda}f$ cannot be expressed by $\sum_{i,j=1}^d\mathcal{S}_{\lambda\mu_{j}}\left(\left<f,q_j\right>_N\right)\left[\mathbf{Q}^{-1}\right]_{ji}q_i$, where $\mathbf{Q}\in\mathbb{R}^{d\times d}$ is a matrix with elements $[\mathbf{Q}]_{ij}=\left<q_i,q_j\right>_N$. However, we have $\mathcal{L}_Lf=\sum_{i,j=1}^d\left<f,q_j\right>_N\left[\mathbf{Q}^{-1}\right]_{ji}q_i$ \cite{sloan1995polynomial}.
\end{remark}

Lasso hyperinterpolation is also different from filtered hyperinterpolation.
\begin{remark}
Lasso hyperinterpolation processes hyperinterpolation coefficients via a soft threshold operator, which processes them in a discontinuous way; however, filtered hyperinterpolation processes them in a continuous way.
\end{remark}
\subsection{Basis element selection and parameter choice}
Basis element selection ability of $\mathcal{L}_L^{\lambda}$ stems from the soft threshold operator, which enforces any $\left<f,p_{\ell}\right>_{N}$ to be $0$ so long as $\left<f,p_{\ell}\right>_{N}\leq \lambda\mu_{\ell}$, and shrinks the rest by subtracting $\lambda\mu_{\ell}$ from them. The following proposition states which basis elements would be dismissed.
\begin{proposition}\label{prop:sparsity}
Under conditions of Theorem \ref{thm:lassohyperinterpolation}, given $\lambda$ and $\{\mu_{\ell}\}_{\ell=1}^{d}$, coefficients corresponding to $p_{\ell}$ which satisfies $|\left<f,p_{\ell}\right>_{N}|\leq \lambda\mu_{\ell}$ are enforced to be $0$. 
\end{proposition}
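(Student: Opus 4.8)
The plan is to verify the claim directly from Definition \ref{def:lassohyperinterpolation}, since the coefficient attached to $p_\ell$ in the expansion of $\mathcal{L}_L^\lambda f$ is exactly $\mathcal{S}_{\lambda\mu_\ell}\bigl(\langle f,p_\ell\rangle_N\bigr)$. Thus it suffices to show that the soft threshold operator $\mathcal{S}_{\lambda\mu_\ell}$ annihilates any input whose absolute value does not exceed the threshold $\lambda\mu_\ell$; the basis element $p_\ell$ is then dismissed precisely because its coefficient is zeroed out.

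First I would fix an index $\ell$ and abbreviate $a:=\langle f,p_\ell\rangle_N$ and $k:=\lambda\mu_\ell$. Since $\lambda>0$ and each penalty parameter $\mu_\ell$ is positive, the threshold $k$ is strictly positive, and the hypothesis $|a|\le k$ is equivalent to the two-sided bound $-k\le a\le k$. I would then substitute these bounds into Definition \ref{def:sto}: from $a\le k$ we get $a-k\le 0$, so $\max(0,a-k)=0$, while from $-k\le a$ we get $a+k\ge 0$, so $\min(0,a+k)=0$. Adding the two contributions gives $\mathcal{S}_k(a)=0$, which is exactly the assertion that the $\ell$th coefficient vanishes.

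Equivalently, one may simply invoke the earlier analysis: Case 3) in the proof of Theorem \ref{thm:lassohyperinterpolation} already treats the regime $-\lambda\mu_\ell\le\alpha_\ell\le\lambda\mu_\ell$ (with $\alpha_\ell=\langle f,p_\ell\rangle_N$) and shows, via two contradictions, that the optimal coefficient $\beta_\ell^*$ is forced to be $0$. So the proposition is really just a restatement of that case, isolated and given its interpretation as basis element selection.

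There is essentially no obstacle here: the statement is an immediate unwinding of the definition of the soft threshold operator together with the positivity of $\lambda$ and of the penalty parameters $\mu_\ell$. The only point worth recording explicitly is that the strict positivity of the threshold $k$ is what makes the two boundary behaviours consistent, so that the $\max$ and the $\min$ terms vanish simultaneously rather than one of them surviving.
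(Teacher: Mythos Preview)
Your proof is correct and follows essentially the same approach as the paper: the paper's own proof simply refers to the preceding discussion, which observes that the soft threshold operator annihilates any coefficient whose absolute value is at most the threshold. Your argument makes this explicit by unwinding Definition~\ref{def:sto}, and your alternative route via Case~3) of Theorem~\ref{thm:lassohyperinterpolation} is also exactly in the spirit of the paper.
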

\begin{proof}
Given in the discussion above.
\end{proof}
\noindent That is, basis elements corresponding to these coefficients would be dismissed, and the rest are kept in constructing a hyperinterpolation polynomial. 

To quantify the basis element selection ability of $\mathcal{L}_L^{\lambda}$, we investigate the sparsity of $\bm{\beta}$, measured by the ``zero norm'' $\|\bm{\beta}\|_0$ which is the number of nonzero entries of $\bm{\beta}$.
\begin{theorem}\label{thm:sparsity}
Under conditions of Theorem \ref{thm:lassohyperinterpolation}, let $\bm{\beta}$ be a solution to problem \eqref{equ:lassooptimization}.
\begin{enumerate}
  \item[1)] If $\lambda=0$, then $\|\bm{\beta}\|_0$ satisfies $\|\bm{\beta}\|_0=\|\bm{\alpha}\|_0=\|\mathbf{A}^{\rm{T}}\mathbf{Wf}\|_0$.
  \item[2)] If $\lambda>0$, then $\|\bm{\beta}\|_0$ satisfies $\|\bm{\beta}\|_0\leq\|\mathbf{A}^{\rm{T}}\mathbf{Wf}\|_0$, more precisely,
      \begin{equation*}
       \|\bm{\beta}\|_0=\|\mathbf{A}^{\rm{T}}\mathbf{Wf}\|_0-\#\left\{\ell:|\left<f,p_{\ell}\right>_{N}|\leq\lambda\mu_{\ell}\text{ and }\left<f,p_{\ell}\right>_{N}\neq0\right\},
      \end{equation*}
      where $\#\{\cdot\}$ denotes the cardinal number of the corresponding set.
\end{enumerate}
\end{theorem}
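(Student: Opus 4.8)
The plan is to exploit the explicit coefficient formula already established in Theorem \ref{thm:lassohyperinterpolation}, namely that every component of the minimizer satisfies $\beta_{\ell}=\mathcal{S}_{\lambda\mu_{\ell}}(\alpha_{\ell})$ with $\alpha_{\ell}=\langle f,p_{\ell}\rangle_N$, and thereby reduce the sparsity statement to a purely combinatorial count of the zero set of the soft threshold operator. First I would record, from the first-order condition \eqref{equ:firstorder} together with $\mathbf{A}^{\text{T}}\mathbf{WA}=\mathbf{I}$, that $\bm{\alpha}=\mathbf{A}^{\text{T}}\mathbf{Wf}$, so that $\|\bm{\alpha}\|_0=\|\mathbf{A}^{\text{T}}\mathbf{Wf}\|_0$ and the entire question becomes one of deciding which entries of $\bm{\alpha}$ survive the thresholding.

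The key elementary fact I would isolate is the zero set of $\mathcal{S}_k$. From Definition \ref{def:sto}, a direct sign analysis on $a$ shows that for $k>0$ one has $\mathcal{S}_k(a)=0$ precisely when $|a|\leq k$ and $\mathcal{S}_k(a)\neq0$ otherwise, while for $k=0$ one has $\mathcal{S}_0(a)=a$ for every $a$. This single observation drives both parts of the theorem.

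For part 1), setting $\lambda=0$ makes every threshold $\lambda\mu_{\ell}$ vanish, so the identity $\mathcal{S}_0(a)=a$ forces $\beta_{\ell}=\alpha_{\ell}$ componentwise; hence $\bm{\beta}=\bm{\alpha}$ and the three displayed quantities coincide. For part 2), with $\lambda>0$ and each $\mu_{\ell}>0$ the thresholds $\lambda\mu_{\ell}$ are strictly positive, so $\beta_{\ell}\neq0$ holds exactly when $|\langle f,p_{\ell}\rangle_N|>\lambda\mu_{\ell}$. I would then partition the support of $\bm{\alpha}$, the index set $\{\ell:\langle f,p_{\ell}\rangle_N\neq0\}$, into the surviving indices with $|\langle f,p_{\ell}\rangle_N|>\lambda\mu_{\ell}$ and the killed indices with $0<|\langle f,p_{\ell}\rangle_N|\leq\lambda\mu_{\ell}$, noting that indices with $\langle f,p_{\ell}\rangle_N=0$ contribute to neither support. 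Counting cardinalities across this disjoint decomposition yields the asserted identity, and the inequality $\|\bm{\beta}\|_0\leq\|\mathbf{A}^{\text{T}}\mathbf{Wf}\|_0$ follows at once because the subtracted cardinal number is nonnegative.

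Since the coefficient formula is already in hand from Theorem \ref{thm:lassohyperinterpolation}, I do not expect any substantive analytic difficulty; the argument is essentially careful bookkeeping of set cardinalities. The only point demanding mild care is verifying that the two index sets in the partition are genuinely disjoint and jointly exhaust the support of $\bm{\alpha}$ --- in particular, that the boundary case $|\langle f,p_{\ell}\rangle_N|=\lambda\mu_{\ell}$ is assigned to the killed set, consistent with $\mathcal{S}_k$ vanishing on the entire closed interval $[-k,k]$.
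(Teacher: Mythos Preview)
Your proposal is correct and follows essentially the same approach as the paper: the paper's proof also invokes the explicit formula $\beta_{\ell}=\mathcal{S}_{\lambda\mu_{\ell}}(\alpha_{\ell})$ from Theorem~\ref{thm:lassohyperinterpolation} (via Proposition~\ref{prop:sparsity}) and simply counts which nonzero entries of $\mathbf{A}^{\mathrm{T}}\mathbf{Wf}$ are annihilated by the soft threshold. Your write-up is in fact more explicit than the paper's, which dispatches part~2) with ``hence assertion~2) holds obviously'' after citing Proposition~\ref{prop:sparsity}.
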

\begin{proof}
If $\lambda=0$, then $\mathcal{L}_L^{\lambda}f$ reduces to $\mathcal{L}_{L}f$, hence $\bm{\beta}=\bm{\alpha}=\mathbf{A}^{\rm{T}}\mathbf{Wf}$, proving assertion 1). Given nonzero entries of $\mathbf{A}^{\rm{T}}\mathbf{Wf}$, as stated in Proposition \ref{prop:sparsity}, Lasso hyperinterpolation enforces those $\left<f,p_{\ell}\right>_{N}$ satisfying $|\left<f,p_{\ell}\right>_{N}|\leq\lambda\mu_{\ell}$ to be zero. Hence assertion 2) holds obviously.
\end{proof}

\begin{remark}
If we measure the level of relevancy between certain basis element and the function $f$ by the absolute value of $\left<f,p_{\ell}\right>_{N}$, then Theorem \ref{thm:sparsity} suggests that we can determine a baseline of such a level of relevancy and dismiss those basis elements with lower relevancy by controlling parameters $\lambda$ and $\{\mu_{\ell}\}_{\ell=1}^{d}$. 
\end{remark}

In an extreme case, we could even set large enough $\lambda$ and $\{\mu_{\ell}\}_{\ell=1}^{d}$ so that all coefficients $\left<f,p_{\ell}\right>_N$ are enforced to be 0. However, as we are constructing a polynomial to approximate some $f\in\mathcal{C}(\Omega)$, this is definitely not the case we desire. Then we have the rather simple but interesting result, a parameter choice rule for $\lambda$ such that $\bm{\beta}\neq \mathbf{0}$.
\begin{theorem}\label{prop:rule}
Adopt conditions of Theorem \ref{thm:lassohyperinterpolation}. If $\lambda<\|\mathbf{A}^{\rm{T}}\mathbf{Wf}\|_{\infty}$, then $\bm{\beta}$ obtained by solving \eqref{equ:lassooptimization} is not $\mathbf{0}\in\mathbb{R}^d$. 
\end{theorem}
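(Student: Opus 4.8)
The plan is to bypass any direct optimization argument and instead exploit the explicit coordinatewise description of the minimizer already furnished by Theorem \ref{thm:lassohyperinterpolation}. That theorem gives the solution $\bm{\beta}$ of \eqref{equ:lassooptimization} componentwise as $\beta_\ell = \mathcal{S}_{\lambda\mu_\ell}(\alpha_\ell)$, where the first-order analysis of Lemma \ref{prop:hyperinterpolation} identifies $\alpha_\ell = [\mathbf{A}^{\text{T}}\mathbf{Wf}]_\ell = \left<f,p_\ell\right>_N$. Consequently the question of whether $\bm{\beta}$ vanishes collapses to the behaviour of the soft threshold operator on each individual hyperinterpolation coefficient, and nothing beyond Definition \ref{def:sto} is needed.

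First I would argue by contradiction: suppose $\bm{\beta}=\mathbf{0}$, i.e.\ $\beta_\ell=0$ for all $\ell=1,\ldots,d$. By Definition \ref{def:sto}, $\mathcal{S}_{\lambda\mu_\ell}(\alpha_\ell)=0$ holds exactly when $|\alpha_\ell|\le\lambda\mu_\ell$; indeed the three-case analysis inside the proof of Theorem \ref{thm:lassohyperinterpolation} shows $\beta_\ell\neq0$ precisely when $|\alpha_\ell|>\lambda\mu_\ell$. Thus $\bm{\beta}=\mathbf{0}$ is equivalent to $|\left<f,p_\ell\right>_N|\le\lambda\mu_\ell$ holding simultaneously for every $\ell$.

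Next I would single out an index $\ell^*$ at which $|\alpha_{\ell^*}|=\max_\ell|\alpha_\ell|=\|\mathbf{A}^{\text{T}}\mathbf{Wf}\|_\infty$. Under the standard normalization $\mu_\ell=1$ (the case isolated right after \eqref{equ:lassoapproximation}) the above reads $\|\mathbf{A}^{\text{T}}\mathbf{Wf}\|_\infty\le\lambda$, which contradicts the hypothesis $\lambda<\|\mathbf{A}^{\text{T}}\mathbf{Wf}\|_\infty$; hence $\bm{\beta}\neq\mathbf{0}$. Equivalently, and more transparently, $\lambda\mu_{\ell^*}=\lambda<|\alpha_{\ell^*}|$ already forces $\beta_{\ell^*}\neq0$, so a single coordinate suffices. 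The hypothesis is moreover non-vacuous because $f\neq0$ guarantees $\|\mathbf{A}^{\text{T}}\mathbf{Wf}\|_\infty\neq0$, so positive $\lambda$ meeting the bound do exist.

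The one delicate point, which I expect to be the main (and essentially only) obstacle, is the bookkeeping with the penalty parameters $\{\mu_\ell\}$. Taking the maximum over $\ell$ in Step~2 yields only $\|\mathbf{A}^{\text{T}}\mathbf{Wf}\|_\infty\le\lambda\max_\ell\mu_\ell$, which reproduces the stated threshold exactly when $\max_\ell\mu_\ell\le1$; for larger penalties the honest sufficient condition is $\lambda<\|\mathbf{A}^{\text{T}}\mathbf{Wf}\|_\infty/\max_\ell\mu_\ell$, or coordinatewise $\lambda\mu_{\ell^*}<|\alpha_{\ell^*}|$ for some $\ell^*$. I would therefore either present the statement under the normalization $\mu_\ell=1$, for which it is verbatim correct, or make the $\max_\ell\mu_\ell$ factor explicit in the general threshold; once this is settled, the conclusion is an immediate consequence of the soft threshold formula.
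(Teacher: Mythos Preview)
Your argument is correct and reaches the same conclusion, but it differs in route from the paper. The paper argues directly from the first-order optimality condition \eqref{equ:lassofirstorder} at $\bm{\beta}=\mathbf{0}$: the inclusion $\bm{\alpha}=\mathbf{A}^{\text{T}}\mathbf{Wf}\in\lambda\partial(\|\mathbf{0}\|_1)=\lambda[-1,1]^d$ immediately gives $\|\mathbf{A}^{\text{T}}\mathbf{Wf}\|_\infty\le\lambda$, and the contrapositive finishes the proof. You instead invoke the explicit soft-threshold formula $\beta_\ell=\mathcal{S}_{\lambda\mu_\ell}(\alpha_\ell)$ already established in Theorem~\ref{thm:lassohyperinterpolation}, which lets you avoid subdifferential calculus at this point but makes the argument rest on that earlier theorem. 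Both routes are short; the paper's is slightly more self-contained, yours is slightly more elementary once Theorem~\ref{thm:lassohyperinterpolation} is in hand.

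Your observation about the penalty parameters $\{\mu_\ell\}$ is well taken: the paper's computation $\partial(\|\mathbf{0}\|_1)=[-1,1]^d$ tacitly treats the penalty as unweighted, so the stated threshold $\lambda<\|\mathbf{A}^{\text{T}}\mathbf{Wf}\|_\infty$ is literally correct only when $\mu_\ell\le1$ for all $\ell$ (in general one needs $\lambda<\max_\ell|\alpha_\ell|/\mu_\ell$, or at least the coordinatewise condition you identify). You are more careful than the paper on this point.
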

\begin{proof}
Suppose to contrary that $\mathbf{0}\in\mathbb{R}^d$ is a stationary point of the objective in \eqref{equ:lassooptimization} , then the first-order condition \eqref{equ:lassofirstorder} with $\mathbf{0}$ gives $\mathbf{A}^{\rm{T}}\mathbf{Wf}\in\lambda\partial(\|\mathbf{0}\|_1)=\lambda[-1,1]^{d}$, leading to $\|\mathbf{A}^{\rm{T}}\mathbf{Wf}\|_{\infty}\leq\lambda$. Hence its contrapositive also holds: if $\lambda<\|\mathbf{A}^{\rm{T}}\mathbf{Wf}\|_{\infty}$, then $\bm{\beta}$ could not be $\mathbf{0}$.
\end{proof}

\section{Error analysis}\label{sec:general}
In this section, the theory of Lasso hyperinterpolation is developed. The denoising ability of $\mathcal{L}_L^{\lambda}$ is measured by the $L_2$ error bounds. One of our two main results is that Lasso can reduce the operator norm of $\mathcal{L}_L$. The other main result is that Lasso hyperinterpolation can reduce the error related to noise. We consider additive noise in this paper, that is, Lasso hyperinterpolation finds an approximation polynomial to $f\in\mathcal{C}(\Omega)$ with noisy data values $f^{\epsilon}(\mathbf{x}_j)=f(\mathbf{x}_j)+\epsilon_j$ at points $\mathbf{x}_j\in \Omega$. It is convenient to regard $f^{\epsilon}$ as a continuous function on $\Omega$, which can be constructed by some interpolation process from values $\{f^{\epsilon}(\mathbf{x}_j)\}_{j=1}^N$ on $\mathcal{X}_N$. 

We first derive $L_2$ error bounds for continuous functions $f\in\mathcal{C}(\Omega)$ with noise-free and noisy data values, respectively. Then we make further discussion on the type of noise. Finally we consider the relation between additional smoothness of $f$ and obtained error bounds. Norms of functions and operators used in our analysis are defined below. For any function $g\in\mathcal{C}(\Omega)$, its uniform norm is defined as $\|g\|_{\infty}:=\sup_{\mathbf{x}\in\Omega}|g(\mathbf{x})|$, and for any $g\in L_2(\Omega)$, its $L_2$ norm is defined as
$\|g\|_2:=(\int_{\Omega}|g|^2\text{d}\omega)^{1/2}$. For any operator $\mathcal{U}_L:\mathcal{C}{(\Omega)}\rightarrow L_2(\Omega)$, its operator norm is defined as 
\begin{equation*}
\|\mathcal{U}_L\|_{\text{op}}:=\sup_{g\in\mathcal{C}(\Omega),g\neq0}\frac{\|\mathcal{U}_Lg\|_{2}}{\|g\|_{\infty}}.
\end{equation*}
For $g\in\mathcal{C}(\Omega)$, let $\varphi^*\in\mathbb{P}_L$ be the best approximation of $g$ in $\mathbb{P}_L$, that is, $E_L(g):=\inf_{\varphi\in\mathbb{P}_L}\|g-\varphi\|_{\infty}=\|g-\varphi^*\|_{\infty}$.

\subsection{The case of continuous functions}
Recall that $\int_{\Omega}\text{d}\omega=V<\infty$. We first state error bounds for $\mathcal{L}_L$ for comparison. 
\begin{proposition}[Theorem 1 in \cite{sloan2012filtered}]
Suppose conditions of Lemma \ref{prop:hyperinterpolation} are assumed. Then
\begin{equation}\label{equ:stability}
\|\mathcal{L}_Lf\|_2\leq V^{1/2}\|f\|_{\infty},
\end{equation}
and 
\begin{equation}\label{equ:error}
\|\mathcal{L}_Lf-f\|_2\leq2V^{1/2}E_L(f).
\end{equation}
Thus $\|\mathcal{L}_Lf-f\|_2\rightarrow0$ as $L\rightarrow\infty$.
\end{proposition}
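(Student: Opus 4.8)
The plan is to establish the stability bound \eqref{equ:stability} first, since both the error bound \eqref{equ:error} and the convergence statement follow from it by essentially routine arguments. For the stability bound, the key observation is that $\mathcal{L}_Lf\in\mathbb{P}_L$, so $(\mathcal{L}_Lf)^2\in\mathbb{P}_{2L}$, and therefore the quadrature exactness \eqref{equ:exactquadrature} lets me pass freely between the continuous and discrete inner products: $\|\mathcal{L}_Lf\|_2^2=\langle\mathcal{L}_Lf,\mathcal{L}_Lf\rangle=\langle\mathcal{L}_Lf,\mathcal{L}_Lf\rangle_N$. First I would record the reproducing property $\langle\mathcal{L}_Lf,g\rangle_N=\langle f,g\rangle_N$ for all $g\in\mathbb{P}_L$, which follows by expanding $g$ in the basis $\{p_\ell\}$ and using \eqref{equ:delta2}. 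Taking $g=\mathcal{L}_Lf$ then gives $\|\mathcal{L}_Lf\|_2^2=\langle f,\mathcal{L}_Lf\rangle_N$.

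Next I would apply the Cauchy--Schwarz inequality to the (positive semidefinite) discrete inner product, obtaining $\langle f,\mathcal{L}_Lf\rangle_N\le\langle f,f\rangle_N^{1/2}\langle\mathcal{L}_Lf,\mathcal{L}_Lf\rangle_N^{1/2}=\langle f,f\rangle_N^{1/2}\|\mathcal{L}_Lf\|_2$. Cancelling one factor of $\|\mathcal{L}_Lf\|_2$ (the inequality being trivial when that factor vanishes) yields $\|\mathcal{L}_Lf\|_2\le\langle f,f\rangle_N^{1/2}$. To finish, I would bound the discrete seminorm of $f$ by its uniform norm, using the positivity of the weights together with the identity $\sum_{j=1}^N w_j=\int_\Omega 1\,\text{d}\omega=V$ (exactness applied to the constant polynomial): $\langle f,f\rangle_N=\sum_{j=1}^N w_j f(\mathbf{x}_j)^2\le\|f\|_\infty^2\sum_{j=1}^N w_j=V\|f\|_\infty^2$. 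Combining the last two displays gives \eqref{equ:stability}.

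For the error bound \eqref{equ:error}, I would exploit that hyperinterpolation reproduces polynomials (Lemma \ref{lem:lemma4}), so $\mathcal{L}_L\varphi^*=\varphi^*$ for the best approximant $\varphi^*\in\mathbb{P}_L$. Writing $\mathcal{L}_Lf-f=\mathcal{L}_L(f-\varphi^*)+(\varphi^*-f)$ and applying the triangle inequality, I would bound the first summand by \eqref{equ:stability} applied to $f-\varphi^*$, giving $V^{1/2}\|f-\varphi^*\|_\infty=V^{1/2}E_L(f)$, and the second by the elementary estimate $\|g\|_2\le V^{1/2}\|g\|_\infty$ applied to $\varphi^*-f$, again giving $V^{1/2}E_L(f)$; summing produces the factor of $2$. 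The convergence $\|\mathcal{L}_Lf-f\|_2\to0$ is then immediate, since $E_L(f)\to0$ as $L\to\infty$ by the density of polynomials in $\mathcal{C}(\Omega)$ (Weierstrass) on the compact set $\Omega$.

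I expect the main obstacle to be the stability bound, specifically the bookkeeping needed to justify each interchange between $\langle\cdot,\cdot\rangle$ and $\langle\cdot,\cdot\rangle_N$: one must ensure that every such interchange is applied only to a product lying in $\mathbb{P}_{2L}$, where exactness is available, and that the reproducing property and Cauchy--Schwarz are invoked in the correct order so that the factor of $\|\mathcal{L}_Lf\|_2$ cancels cleanly. Once \eqref{equ:stability} is in hand, the error bound and the convergence require no further ideas and follow as direct corollaries.
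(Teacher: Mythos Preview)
Your proof is correct. The paper does not include its own proof of this proposition---it is quoted from \cite{sloan2012filtered}---but your argument is the standard one and matches exactly the technique the paper uses when proving the analogous Lasso bound in Theorem~\ref{prop:noisefree}: pass from $\|\mathcal{L}_Lf\|_2^2$ to the discrete inner product via exactness on $\mathbb{P}_{2L}$, bound $\langle\mathcal{L}_Lf,\mathcal{L}_Lf\rangle_N$ by $\langle f,f\rangle_N$, and then estimate $\langle f,f\rangle_N\le V\|f\|_\infty^2$ using positivity of the weights and $\sum_j w_j=V$; for the error bound, split through $\varphi^*$ using $\mathcal{L}_L\varphi^*=\varphi^*$.
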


We have the following lemma to describe properties of $\mathcal{L}_L^{\lambda}$.
\begin{lemma}\label{lem:mainlemma}
Under conditions of Theorem \ref{thm:lassohyperinterpolation},
\begin{itemize}
 \item[(1)]$\left<f-\mathcal{L}_{L}^{\lambda}f,\mathcal{L}_{L}^{\lambda}f\right>_{N}=K(f)$,
  \item[(2)] $\left<\mathcal{L}_{L}^{\lambda}f,\mathcal{L}_{L}^{\lambda}f\right>_{N}+\left<f-\mathcal{L}_{L}^{\lambda}f,f-\mathcal{L}_{L}^{\lambda}f\right>_{N}=
\left<f,f\right>_{N}-2K(f)$, and thus $K(f)$ satisfies $K(f)\leq\left<f,f\right>_{N}/2$,
  \item[(3)]$\left<\mathcal{L}_{L}^{\lambda}f,\mathcal{L}_{L}^{\lambda}f\right>_{N}\leq\left<f,f\right>_{N}-2K(f)$,
\end{itemize}
where
\begin{equation*}
K(f)=\sum_{\ell=1}^{d}\left(\mathcal{S}_{\lambda\mu_{\ell}}(\alpha_{\ell})\alpha_{\ell}-\left(\mathcal{S}_{\lambda\mu_{\ell}}(\alpha_{\ell})\right)^2\right)\geq0
\end{equation*}
and $K(f)=0$ if $\lambda=0$ or if $\lambda$ is so large that $|\alpha_{\ell}|\leq\lambda\mu_{\ell}$ for all $\ell$. Here $K(f)$ is a constant relying on $f$, by noting that $\alpha_{\ell}=\sum_{j=1}^Nw_jp_{\ell}(\mathbf{x}_j)f(\mathbf{x}_j)$.
\end{lemma}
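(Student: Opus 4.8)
The plan is to reduce everything to the discrete orthonormality relation \eqref{equ:delta2}, namely $\langle p_i,p_k\rangle_N=\delta_{ik}$, which is the engine that turns each discrete inner product into a finite sum over the coefficients $\alpha_\ell=\langle f,p_\ell\rangle_N$ and the shrunken coefficients $\beta_\ell:=\mathcal{S}_{\lambda\mu_\ell}(\alpha_\ell)$. First I would record the two elementary expansions that follow immediately from \eqref{equ:delta2}: writing $\mathcal{L}_L^{\lambda}f=\sum_\ell\beta_\ell p_\ell$, bilinearity gives $\langle f,\mathcal{L}_L^{\lambda}f\rangle_N=\sum_\ell\beta_\ell\alpha_\ell$ and $\langle\mathcal{L}_L^{\lambda}f,\mathcal{L}_L^{\lambda}f\rangle_N=\sum_\ell\beta_\ell^2$.

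Item (1) then follows simply by subtracting these two identities, since $\langle f-\mathcal{L}_L^{\lambda}f,\mathcal{L}_L^{\lambda}f\rangle_N=\langle f,\mathcal{L}_L^{\lambda}f\rangle_N-\langle\mathcal{L}_L^{\lambda}f,\mathcal{L}_L^{\lambda}f\rangle_N=\sum_\ell(\beta_\ell\alpha_\ell-\beta_\ell^2)$, which is exactly $K(f)$. For item (2) I would expand the self inner product $\langle f-\mathcal{L}_L^{\lambda}f,f-\mathcal{L}_L^{\lambda}f\rangle_N$ by bilinearity, substitute the two expansions above, and identify the cross term via (1); the identity with right-hand side $\langle f,f\rangle_N-2K(f)$ drops out after collecting terms. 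The inequality $K(f)\leq\langle f,f\rangle_N/2$ then comes for free: the left-hand side of (2) is a sum of two discrete inner products of the form $\langle g,g\rangle_N=\sum_j w_j g(\mathbf{x}_j)^2$, which is nonnegative because every weight $w_j$ is positive, so the right-hand side $\langle f,f\rangle_N-2K(f)$ must be nonnegative as well. Item (3) is then immediate from (2), since dropping the nonnegative term $\langle f-\mathcal{L}_L^{\lambda}f,f-\mathcal{L}_L^{\lambda}f\rangle_N$ only enlarges the right-hand side.

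The remaining auxiliary claims concern the sign and the degeneracy of $K(f)$. To show $K(f)\geq0$ I would argue termwise: when $\beta_\ell\neq0$, the soft threshold of Definition \ref{def:sto} guarantees that $\beta_\ell$ and $\alpha_\ell$ share a sign and that $|\beta_\ell|=|\alpha_\ell|-\lambda\mu_\ell\leq|\alpha_\ell|$, whence $\beta_\ell\alpha_\ell=|\beta_\ell|\,|\alpha_\ell|\geq\beta_\ell^2$; when $\beta_\ell=0$ the summand vanishes. Hence each summand $\beta_\ell\alpha_\ell-\beta_\ell^2$ is nonnegative. The two degenerate cases are checked directly from Definition \ref{def:sto}: if $\lambda=0$ then $\mathcal{S}_0$ is the identity, so $\beta_\ell=\alpha_\ell$ and each summand equals $\alpha_\ell^2-\alpha_\ell^2=0$; if $\lambda$ is large enough that $|\alpha_\ell|\leq\lambda\mu_\ell$ for every $\ell$, then all $\beta_\ell=0$ and $K(f)=0$.

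I do not anticipate a genuine obstacle, as the whole lemma is a bookkeeping exercise resting on \eqref{equ:delta2}. The only point requiring slight care is keeping the signs straight in the termwise nonnegativity of $K(f)$, where one must handle the three branches of the soft threshold operator—or, more efficiently, exploit the unified same-sign-and-shrinkage observation above—rather than appealing to a single closed-form formula.
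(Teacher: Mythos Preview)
Your proposal is correct and follows essentially the same approach as the paper: both arguments rest on the discrete orthonormality \eqref{equ:delta2} to reduce the inner products to sums over $\alpha_\ell$ and $\beta_\ell=\mathcal{S}_{\lambda\mu_\ell}(\alpha_\ell)$, expand bilinearly to obtain (1) and (2), and use the termwise sign-and-shrinkage property of the soft threshold for $K(f)\geq0$. Your write-up is slightly more streamlined in recording $\langle f,\mathcal{L}_L^{\lambda}f\rangle_N=\sum_\ell\beta_\ell\alpha_\ell$ and $\langle\mathcal{L}_L^{\lambda}f,\mathcal{L}_L^{\lambda}f\rangle_N=\sum_\ell\beta_\ell^2$ up front, but the substance is the same.
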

\begin{proof}
The positiveness of $K(f)$ stems from $|\alpha_{\ell}|\geq|\mathcal{S}_{\lambda\mu_{\ell}}(\alpha_{\ell})|$ and from the fact that they have the same signs if $\mathcal{S}_{\lambda\mu_{\ell}}(\alpha_{\ell})\neq0$. When $\lambda=0$ we have $\mathcal{S}_{\lambda\mu_{\ell}}(\alpha_{\ell})=\alpha_{\ell}$, and when $\lambda$ is so large that $|\alpha_{\ell}|\leq\lambda\mu_{\ell}$ we have $\mathcal{S}_{\lambda\mu_{\ell}}(\alpha_{\ell})=0$, both make $K(f)$ be 0.\\
(1) This follows from $\left<f-\mathcal{L}_{L}^{\lambda}f,\mathcal{L}_{L}^{\lambda}f\right>_{N}=\sum\limits_{k=1}^{d}\mathcal{S}_{\lambda\mu_{k}}(\alpha_{k})\left<f-\sum\limits_{\ell=1}^{d}\mathcal{S}_{\lambda\mu_{\ell}}(\alpha_{\ell})p_{\ell},p_k\right>_N$ and
\begin{equation*}
\left<f-\sum_{\ell=1}^{d}\mathcal{S}_{\lambda\mu_{\ell}}(\alpha_{\ell})p_{\ell},p_k\right>_N=\left<f,p_k\right>_N-\left<\sum_{\ell=1}^{d}\mathcal{S}_{\lambda\mu_{\ell}}(\alpha_{\ell})p_{\ell},p_k\right>_N=\alpha_{k}-\mathcal{S}_{\lambda\mu_{k}}(\alpha_{k}).
\end{equation*}
(2) It follows from (1) that $\left<\mathcal{L}_{L}^{\lambda}f,\mathcal{L}_{L}^{\lambda}f\right>_N=\left<f,\mathcal{L}_{L}^{\lambda}f\right>_{N}-K(f)$, and the second term of the left-hand side can be written as 
\begin{equation*}
\left<f-\mathcal{L}_{L}^{\lambda}f,f-\mathcal{L}_{L}^{\lambda}f\right>_{N}=\left<f,f\right>_N-2\left<f,\mathcal{L}_{L}^{\lambda}f\right>_N+\left<\mathcal{L}_{L}^{\lambda}f,\mathcal{L}_{L}^{\lambda}f\right>_N. 
\end{equation*}
Summing them up and using (1) again lead to the equality. It follows from $\left<g,g\right>_N\geq0$ for any $g\in\mathcal{C}(\Omega)$ that $\left<f,f\right>_{N}-2K(f)\geq0$, thus we obtain the upper bound of $K(f)$.\\
(3) This is immediately from (2) and the positiveness of $\left<f-\mathcal{L}_{L}^{\lambda}f,f-\mathcal{L}_{L}^{\lambda}f\right>_{N}$.
\end{proof}

\begin{remark}
$K(f)=0$ implies either $\lambda=0$ or $\lambda$ is so large that $\mathcal{L}_L^{\lambda}f=0$. However, Definition \ref{def:lassohyperinterpolation} of Lasso hyperinterpolation requires $\lambda>0$, and the parameter choice rule for $\lambda$ described in Theorem \ref{prop:rule} prevents $\mathcal{L}_L^{\lambda}f$ to be $0$. Thus $K(f)>0$ always holds if $\lambda$ is chosen appropriately with respect to Theorem \ref{prop:rule}.
\end{remark}

In a noise-free case, we show that $\mathcal{L}_L^{\lambda}$ can reduce the stability estimation \eqref{equ:stability} and reduce the factor 2 in the error estimation \eqref{equ:error} of $\mathcal{L}_L$, but it introduces an additional term into the error bound, which we call it an \emph{regularization error}. The term $K(f)$ in Lemma \ref{lem:mainlemma} will be used in our estimation.
\begin{theorem}\label{prop:noisefree}
Adopt conditions of Theorem \ref{prop:rule}. Then there exists $\tau_1<1$, which relies on $f$ and is inversely related to $K(f)$, such that
\begin{equation}\label{equ:lassostability}
\|\mathcal{L}_L^{\lambda}f\|_2\leq \tau_1 V^{1/2}\|f\|_{\infty},
\end{equation}
where $V=\int_{\Omega}\rm{d}\omega$, and there exists $\tau_2<1$, which relies on $f$ and is inversely related to $K(f-\varphi^*)$, such that
\begin{equation}\label{equ:lassoerror}
\|\mathcal{L}^{\lambda}_Lf-f\|_2\leq(1+\tau_2)V^{1/2}E_{L}(f)+\|\varphi^*-\mathcal{L}_L^{\lambda}\varphi^*\|_{2},
\end{equation}
where $\varphi^*$ is the best approximation of $f$ in $\mathbb{P}_L$ over $\Omega$. 
\end{theorem}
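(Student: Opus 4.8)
The plan is to handle the two estimates separately, with the first being a direct consequence of Lemma~\ref{lem:mainlemma} and the second reducing, modulo one genuinely delicate cross term, to the first applied to a residual.

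For the stability bound \eqref{equ:lassostability} I would first exploit that $\mathcal{L}_L^{\lambda}f\in\mathbb{P}_L$, so $(\mathcal{L}_L^{\lambda}f)^2\in\mathbb{P}_{2L}$ and the exactness \eqref{equ:exactquadrature} converts the continuous norm into the discrete one, $\|\mathcal{L}_L^{\lambda}f\|_2^2=\langle\mathcal{L}_L^{\lambda}f,\mathcal{L}_L^{\lambda}f\rangle_N$. Applying Lemma~\ref{lem:mainlemma}(3) then gives $\langle\mathcal{L}_L^{\lambda}f,\mathcal{L}_L^{\lambda}f\rangle_N\le\langle f,f\rangle_N-2K(f)$, and I would bound $\langle f,f\rangle_N=\sum_j w_j f(\mathbf{x}_j)^2\le\|f\|_\infty^2\sum_j w_j=V\|f\|_\infty^2$, where $\sum_j w_j=V$ comes from integrating the constant $1\in\mathbb{P}_{2L}$ exactly. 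Combining yields $\|\mathcal{L}_L^{\lambda}f\|_2^2\le V\|f\|_\infty^2-2K(f)$, so I set $\tau_1:=(1-2K(f)/(V\|f\|_\infty^2))^{1/2}$; the remark following Lemma~\ref{lem:mainlemma} ensures $K(f)>0$ under the rule of Theorem~\ref{prop:rule}, hence $\tau_1<1$ and $\tau_1$ shrinks as $K(f)$ grows.

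For the error bound \eqref{equ:lassoerror} I would bring in the best approximation $\varphi^*\in\mathbb{P}_L$ and split by the triangle inequality,
\[
\|\mathcal{L}_L^{\lambda}f-f\|_2\le\|\mathcal{L}_L^{\lambda}f-\mathcal{L}_L^{\lambda}\varphi^*\|_2+\|\mathcal{L}_L^{\lambda}\varphi^*-\varphi^*\|_2+\|\varphi^*-f\|_2.
\]
The last term is immediately $\le V^{1/2}\|\varphi^*-f\|_\infty=V^{1/2}E_L(f)$, supplying the ``$1$'' in the factor $1+\tau_2$, and the middle term is precisely the regularization error retained in the statement. The first, cross term is where $\tau_2<1$ and $K(f-\varphi^*)$ must appear: since $\mathcal{L}_L^{\lambda}f-\mathcal{L}_L^{\lambda}\varphi^*\in\mathbb{P}_L$, exactness and discrete orthonormality reduce its squared norm to $\sum_\ell(\mathcal{S}_{\lambda\mu_\ell}(\langle f,p_\ell\rangle_N)-\mathcal{S}_{\lambda\mu_\ell}(\langle\varphi^*,p_\ell\rangle_N))^2$, and my intention is to control it by transporting the Part-one mechanism to the residual $f-\varphi^*$, whose Lasso hyperinterpolation satisfies $\|\mathcal{L}_L^{\lambda}(f-\varphi^*)\|_2^2\le\langle f-\varphi^*,f-\varphi^*\rangle_N-2K(f-\varphi^*)\le V E_L(f)^2-2K(f-\varphi^*)$, which is exactly the form that produces $\tau_2:=(1-2K(f-\varphi^*)/(V E_L(f)^2))^{1/2}<1$.

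The hard part will be this cross term. Because the soft threshold operator renders $\mathcal{L}_L^{\lambda}$ \emph{nonlinear}, one has $\mathcal{L}_L^{\lambda}f-\mathcal{L}_L^{\lambda}\varphi^*\neq\mathcal{L}_L^{\lambda}(f-\varphi^*)$, so I cannot simply invoke the stability estimate on $f-\varphi^*$. The workhorse I would rely on is that each $\mathcal{S}_{\lambda\mu_\ell}$ is nonexpansive and monotone, which at least delivers the coarse bound $\|\mathcal{L}_L^{\lambda}f-\mathcal{L}_L^{\lambda}\varphi^*\|_2\le\|\mathcal{L}_L(f-\varphi^*)\|_2\le V^{1/2}E_L(f)$, i.e.\ the case $\tau_2=1$ via \eqref{equ:stability}. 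Upgrading this to the sharp $\tau_2<1$ governed by $K(f-\varphi^*)$ requires a careful coordinatewise comparison between $\mathcal{S}_{\lambda\mu_\ell}(\langle f,p_\ell\rangle_N)-\mathcal{S}_{\lambda\mu_\ell}(\langle\varphi^*,p_\ell\rangle_N)$ and the thresholded residual coefficient $\mathcal{S}_{\lambda\mu_\ell}(\langle f-\varphi^*,p_\ell\rangle_N)$, together with reconciling the resulting additivity defect against the regularization error $\|\varphi^*-\mathcal{L}_L^{\lambda}\varphi^*\|_2$. I expect routing the quantity $K(f-\varphi^*)$ through the nonlinear operator while preserving the clean form of the stated bound to be the principal technical obstacle, and I would resolve it by arguing directly from the explicit piecewise-linear description of $\mathcal{S}_{\lambda\mu_\ell}$ on each coefficient rather than through any linearity property.
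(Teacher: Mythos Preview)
Your argument for the stability bound \eqref{equ:lassostability} is exactly the paper's: exactness converts the continuous norm to the discrete one, Lemma~\ref{lem:mainlemma}(3) inserts $-2K(f)$, and $\tau_1$ is defined by $\sqrt{V\|f\|_\infty^2-2K(f)}=\tau_1 V^{1/2}\|f\|_\infty$.

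For the error bound \eqref{equ:lassoerror} the paper does \emph{not} use your three-term split $\|\mathcal{L}_L^{\lambda}f-\mathcal{L}_L^{\lambda}\varphi^*\|_2+\|\mathcal{L}_L^{\lambda}\varphi^*-\varphi^*\|_2+\|\varphi^*-f\|_2$. Instead it writes the algebraic identity
\[
\mathcal{L}_L^{\lambda}f-f=\mathcal{L}_L^{\lambda}(f-\varphi^*)-(f-\varphi^*)-(\varphi^*-\mathcal{L}_L^{\lambda}\varphi^*)
\]
and then applies the stability estimate directly to $\mathcal{L}_L^{\lambda}(f-\varphi^*)$, which immediately produces $\tau_2 V^{1/2}E_L(f)$ with $\tau_2$ tied to $K(f-\varphi^*)$. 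No cross term ever appears.

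Your instinct that this is where the difficulty hides is sound: the displayed identity rearranges to $\mathcal{L}_L^{\lambda}f=\mathcal{L}_L^{\lambda}(f-\varphi^*)+\mathcal{L}_L^{\lambda}\varphi^*$, i.e.\ it treats the nonlinear operator $\mathcal{L}_L^{\lambda}$ as additive. So the ``principal technical obstacle'' you flag is precisely the step the paper asserts as an equality without comment. In other words, the paper's route is shorter because it bypasses the cross term entirely, but it does so by tacitly invoking the additivity you correctly doubt; your route is more scrupulous about the nonlinearity, but you do not actually close the gap, and the coordinatewise comparison you sketch between $\mathcal{S}_{\lambda\mu_\ell}(a)-\mathcal{S}_{\lambda\mu_\ell}(b)$ and $\mathcal{S}_{\lambda\mu_\ell}(a-b)$ will not in general recover exactly $K(f-\varphi^*)$. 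If you want to match the paper's stated bound, you should simply adopt its decomposition and accept that the first equality is being used as if $\mathcal{L}_L^{\lambda}$ were linear.
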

\begin{proof}
Inequality \eqref{equ:lassostability} follows from
\begin{equation*}\begin{split}
\left\|\mathcal{L}_L^{\lambda}f\right\|_2^2&=\left<\mathcal{L}_L^{\lambda}f,\mathcal{L}_L^{\lambda}f\right>
=\left<\mathcal{L}_L^{\lambda}f,\mathcal{L}_L^{\lambda}f\right>_{N}
\leq\left<f,f\right>_{N}-2K(f)\\
&=\sum_{j=1}^Nw_jf(\mathbf{x}_j)^2-2K(f)\leq\sum_{j=1}^Nw_j\|f\|_{\infty}^2-2K(f)=V\|f\|_{\infty}^2-2K(f),
\end{split}\end{equation*}
where in the second equation we use the fact $\mathcal{L}_L^{\lambda}f\in\mathbb{P}_L$, and the next inequality is due to Lemma \ref{lem:mainlemma}(3). As $K(f)>0$, there exists $\tau_1=\tau_1(K(f))<1$, which is inversely related to $K(f)$, such that 
\begin{equation*}
\sqrt{V\|f\|_{\infty}^2-2K(f)}=\tau_1 V^{1/2}\|f\|_{\infty}. 
\end{equation*}
Then for any polynomial $\varphi\in\mathbb{P}_L$, 
\begin{equation*}\begin{split}
\|\mathcal{L}_L^{\lambda}f-f\|_{2}&=\|\mathcal{L}_L^{\lambda}(f-\varphi)-(f-\varphi)-(\varphi-\mathcal{L}_L^{\lambda}\varphi)\|_{2}\\
&\leq\|\mathcal{L}_L^{\lambda}(f-\varphi)\|_{2}+\|f-\varphi\|_{2}+\|\varphi-\mathcal{L}_L^{\lambda}\varphi\|_{2}.
\end{split}\end{equation*}
Since the inequality holds for arbitrary $\varphi\in\mathbb{P}_L$, we let $\varphi=\varphi^*$. Then there exists $\tau_2=\tau_2(K(f-\varphi^*))<1$, which is inversely related to $K(f-\varphi^*)$, such that
\begin{equation*}
\|\mathcal{L}_L^{\lambda}f-f\|_{2}\leq\tau_2 V^{1/2}\|f-\varphi^*\|_{\infty}+V^{1/2}\|f-\varphi^*\|_{\infty}+\|\varphi^*-\mathcal{L}_L^{\lambda}\varphi^*\|_{2},
\end{equation*}
where the second term on the right side is due to Cauchy-Schwarz inequality, which ensures $\|g\|_{2}=\sqrt{\left<g,g\right>}\leq\|g\|_{\infty}\sqrt{\left<1,1\right>}=V^{1/2}\|g\|_{\infty}$ for all $g\in\mathcal{C}(\Omega)$. Hence we obtain the error bound \eqref{equ:lassoerror}.
\end{proof}

Inequality \eqref{equ:lassostability} gives $\|\mathcal{L}_L^{\lambda}\|_{\text{op}}\leq\tau_1 V^{1/2}$, showing the norm of $\mathcal{L}_L^{\lambda}$ is less than that of hyperinterpolation $\mathcal{L}_L$. For the error estimation \eqref{equ:lassoerror}, passing to the limit of $L$ gives the following limit case
\begin{equation*}
\lim_{L\rightarrow\infty}\|\mathcal{L}^{\lambda}_Lf-f\|_2\leq\lim_{L\rightarrow\infty}\|\varphi^*-\mathcal{L}_L^{\lambda}\varphi^*\|_{2}\neq0,
\end{equation*}
due to the fact that $\mathcal{L}^{\lambda}_{L}\varphi\neq \varphi$ for all $\varphi\in\mathbb{P}_L$. Only when $\lambda\rightarrow0$, we can have $\lim_{L\rightarrow\infty}\|\varphi^*-\mathcal{L}_L^{\lambda}\varphi^*\|_{2}=0$ because $\mathcal{L}_L\varphi^*=\varphi^*$, suggested by Lemma \ref{lem:lemma4}.

\begin{remark}
Comparing with the stability result \eqref{equ:stability} and the error bound \eqref{equ:error} of $\mathcal{L}_L$, it is shown that Lasso hyperinterpolation can reduce both of them, but an additional regularization error $\|\varphi^*-\mathcal{L}_L^{\lambda}\varphi^*\|_{2}$ is introduced in a natural manner. In general, we do not recommend the use of Lasso in the absence of noise. However, if the data values are noisy, then $\mathcal{L}_L^{\lambda}$ will play an important part in reducing noise.
\end{remark}

The following theorem describes the denoising ability of $\mathcal{L}_L^{\lambda}$.
\begin{theorem}\label{thm:noisybound}
Adopt conditions of Theorem \ref{prop:rule}. Assume $f^{\epsilon}\in\mathcal{C}(\Omega)$ is a noisy version of $f$, and let $\mathcal{L}_L^{\lambda}f^{\epsilon}\in\mathbb{P}_L$ be defined by \eqref{equ:lassohyperinterpolation}. Then there exists $\tau_3<1$, which relies on $f$ and $f^{\epsilon}$, and is inversely related to $K(f^{\epsilon}-\varphi^*)$, such that
\begin{equation}\label{equ:lassoerrornoise}
\|\mathcal{L}_L^{\lambda}f^{\epsilon}-f\|_2\leq\tau_3 V^{1/2}\|f-f^{\epsilon}\|_{\infty}+(1+\tau_3)V^{1/2}E_{L}(f)+\|\varphi^*-\mathcal{L}_L^{\lambda}\varphi^*\|_{2},
\end{equation}
where $V=\int_{\Omega}\rm{d}\omega$ and $\varphi^*$ is the best approximation of $f$ in $\mathbb{P}_L$ over $\Omega$. 
\end{theorem}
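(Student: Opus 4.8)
The plan is to follow the noise-free argument of Theorem~\ref{prop:noisefree} verbatim in structure, but to center the expansion at $\varphi^*$ (the best approximant of the clean function $f$) and to feed the perturbed function $f^{\epsilon}-\varphi^*$ into the stability estimate \eqref{equ:lassostability}, so that the noise contribution picks up a contraction factor strictly below one. Concretely, I would begin from the decomposition
\begin{equation*}
\mathcal{L}_L^{\lambda}f^{\epsilon}-f=\mathcal{L}_L^{\lambda}(f^{\epsilon}-\varphi^*)-(\varphi^*-\mathcal{L}_L^{\lambda}\varphi^*)+(\varphi^*-f),
\end{equation*}
which is the exact analogue of the expansion used for $\mathcal{L}_L^{\lambda}f-f$ in Theorem~\ref{prop:noisefree}. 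Applying the triangle inequality in $\|\cdot\|_2$ then splits the error into a Lasso term $\|\mathcal{L}_L^{\lambda}(f^{\epsilon}-\varphi^*)\|_2$, the regularization error $\|\varphi^*-\mathcal{L}_L^{\lambda}\varphi^*\|_2$ (which already appears in the statement and is left untouched), and a best-approximation term $\|\varphi^*-f\|_2$.

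Next I would estimate the two remaining pieces. For the best-approximation term, the Cauchy-Schwarz inequality gives $\|\varphi^*-f\|_2\le V^{1/2}\|\varphi^*-f\|_\infty=V^{1/2}E_L(f)$, exactly as in Theorem~\ref{prop:noisefree}. For the Lasso term I apply the stability inequality \eqref{equ:lassostability} with $g=f^{\epsilon}-\varphi^*$ in place of $f$; by Lemma~\ref{lem:mainlemma} this yields a constant $\tau_3=\tau_1(K(f^{\epsilon}-\varphi^*))<1$, inversely related to $K(f^{\epsilon}-\varphi^*)$, with $\|\mathcal{L}_L^{\lambda}(f^{\epsilon}-\varphi^*)\|_2\le\tau_3 V^{1/2}\|f^{\epsilon}-\varphi^*\|_\infty$. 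The final ingredient is to separate noise from approximation inside $\|f^{\epsilon}-\varphi^*\|_\infty$ via $\|f^{\epsilon}-\varphi^*\|_\infty\le\|f^{\epsilon}-f\|_\infty+\|f-\varphi^*\|_\infty=\|f^{\epsilon}-f\|_\infty+E_L(f)$. Substituting and collecting, the $\tau_3 V^{1/2}\|f^{\epsilon}-f\|_\infty$ part becomes the noise term, while $\tau_3 V^{1/2}E_L(f)$ merges with the $V^{1/2}E_L(f)$ above to produce the factor $(1+\tau_3)$, which reproduces \eqref{equ:lassoerrornoise}.

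I expect the main obstacle to lie in securing the strict contraction $\tau_3<1$ rather than merely $\tau_3\le1$ on the noise term, since the strictness is the whole point of the denoising claim. A naive bound using only the non-expansiveness of the soft threshold operator would place a coefficient of one in front of $\|f^{\epsilon}-f\|_\infty$ and exhibit no gain; the improvement must instead be extracted from the term $-2K(f^{\epsilon}-\varphi^*)$ carried inside \eqref{equ:lassostability}, which is precisely why the stability estimate has to act on $f^{\epsilon}-\varphi^*$ as a single argument. This forces two points of care: $\mathcal{L}_L^{\lambda}$ is not linear, so the opening decomposition must be read through the same device as Theorem~\ref{prop:noisefree}; and one must ensure $K(f^{\epsilon}-\varphi^*)>0$ so that $\tau_3$ is genuinely less than one, which is guaranteed when $\lambda$ obeys the parameter choice rule of Theorem~\ref{prop:rule} applied to $f^{\epsilon}-\varphi^*$.
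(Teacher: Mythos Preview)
Your proposal is correct and follows essentially the same approach as the paper's own proof: the same decomposition centered at $\varphi^*$, the triangle inequality, the stability bound \eqref{equ:lassostability} applied to $f^{\epsilon}-\varphi^*$ to produce $\tau_3<1$, Cauchy--Schwarz for $\|f-\varphi^*\|_2\le V^{1/2}E_L(f)$, and the final split $\|f^{\epsilon}-\varphi^*\|_\infty\le\|f^{\epsilon}-f\|_\infty+E_L(f)$. Your additional commentary on the nonlinearity of $\mathcal{L}_L^{\lambda}$ and on why $K(f^{\epsilon}-\varphi^*)>0$ under Theorem~\ref{prop:rule} goes slightly beyond what the paper spells out, but the argument itself is the same.
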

\begin{proof}
For any polynomial $\varphi\in\mathbb{P}_L$, we have
\begin{equation*}\begin{split}
\|\mathcal{L}_L^{\lambda}f^{\epsilon}-f\|_{2}&=\|\mathcal{L}_L^{\lambda}(f^{\epsilon}-\varphi)-(f-\varphi)-(\varphi-\mathcal{L}_L^{\lambda}\varphi)\|_{2}\\
&\leq\|\mathcal{L}_L^{\lambda}(f^{\epsilon}-\varphi)\|_{2}+\|f-\varphi\|_{2}+\|\varphi-\mathcal{L}_L^{\lambda}\varphi\|_{2}.
\end{split}\end{equation*}
Then by Theorem \ref{prop:noisefree}, letting $\varphi=\varphi^*$ gives 
\begin{equation*}
\|\mathcal{L}_L^{\lambda}f^{\epsilon}-f\|_{2}\leq\tau_3 V^{1/2}\|f^{\epsilon}-\varphi^*\|_{\infty}+V^{1/2}\|f-\varphi^*\|_{\infty}+\|\varphi^*-\mathcal{L}_L^{\lambda}\varphi^*\|_{2},
\end{equation*}
where $\tau_3<1$ is inversely related to $K(f^{\epsilon}-\varphi^*)$. Estimating $\|f^{\epsilon}-\varphi^*\|_{\infty}$ by $\|f^{\epsilon}-\varphi^*\|_{\infty}\leq\|f^{\epsilon}-f\|_{\infty}+\|f-\varphi^*\|_{\infty}$ gives \eqref{equ:lassoerrornoise}.
\end{proof}
\begin{remark}
If Lasso is not incorporated, using the stability result \eqref{equ:stability} of $\mathcal{L}_L$ gives the following estimation which describes how $\mathcal{L}_L$ handles noisy functions:
\begin{equation}\label{equ:hypernoise}
\|\mathcal{L}_Lf^{\epsilon}-f\|_2\leq V^{1/2}\|f-f^{\epsilon}\|_{\infty}+2V^{1/2}E_{L}(f),
\end{equation}
which enlarges the part $\tau_3 V^{1/2}\|f-f^{\epsilon}\|_{\infty}+(1+\tau_3)V^{1/2}E_{L}(f)$ in \eqref{equ:lassoerrornoise} but vanishes the regularization error. In principle, there should be a trade-off choice strategy for $\lambda$; and in practice, when the level of noise is of a significant scale, denoising is at the top priority, and the regularization error now has little to contribute to the total error bound. 
\end{remark}

\subsection{A discussion on noise}\label{sec:noise}
We have obtained error bounds of $\mathcal{L}_L^{\lambda}$ when $f\in\mathcal{C}(\Omega)$, and we now continue to discuss the term $\|f-f^{\epsilon}\|_{\infty}$ with respect to different kinds of noise. 

Let $\bm{\epsilon}=[\epsilon_1,\epsilon_2,\ldots,\epsilon_N]^{\text{T}}\in\mathbb{R}^N$ be a vector of noise added onto $\{f(\mathbf{x}_j)\}_{j=1}^N$, where $\|\bm{\epsilon}\|_{\infty}=\max_j|\epsilon_j|$. It is natural to assume that $\|f-f^{\epsilon}\|_{\infty}=\|\bm{\epsilon}\|_{\infty}$, which means that we adopt the deterministic noise model and allow the worst noise level to be at any point of $\mathcal{X}_N$. This assumption was suggested in \cite{pereverzev2015parameter}, which simplifies the estimation of $\|f-f^{\epsilon}\|_{\infty}$ and provides a possible way to study different types of noise in sampling $f^{\epsilon}(\mathbf{x}_j)$. With the randomness of $\bm{\epsilon}$, we can establish error estimations of $\mathcal{L}_L^{\lambda}$ in the form of mathematical expectations, in which $\mathbb{E}(\|f-f^{\epsilon}\|_{\infty})$ is transformed into $\mathbb{E}(\|\bm{\epsilon}\|_{\infty})=\mathbb{E}(\max_{j}|\epsilon_j|)$. If the distribution of noise is known, then the term $\mathbb{E}\left(\max_j|\epsilon_j|\right)$ can be estimated analytically. In this paper, for example, if we let $\epsilon_j$ be a \emph{sub-Gaussian random variable} \cite[Section 2.5]{MR3837109}, which is equipped with a \emph{sub-Gaussian norm} $$\|\epsilon_j\|_{\psi_2}:=\inf\{T>0:\mathbb{E}(\exp(\epsilon_j^2/T^2)\leq2)\},$$
then $\mathbb{E}(\|\bm{\epsilon}\|_{\infty})$ can be estimated by the following lemma.
\begin{lemma}\cite[Section 2.5.2]{MR3837109}\label{lem:noiseestimation}
If $\epsilon_1,\ldots,\epsilon_N$ are a sequence of sub-Gaussian random variables, which are not necessarily independent, then
\begin{equation}\label{equ:noiseestimation}
\mathbb{E}(\|\bm{\epsilon}\|_{\infty})\leq cG\sqrt{\log{N}},
\end{equation}
where $G=\max_j\|\epsilon_j\|_{\psi_2}$ and $c>0$ is a generic constant. 
\end{lemma}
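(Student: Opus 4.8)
The plan is to reproduce the standard maximal-inequality argument for sub-Gaussian variables, whose only probabilistic ingredient is a union bound; consequently the absence of independence among $\epsilon_1,\ldots,\epsilon_N$ poses no difficulty whatsoever. Throughout, write $M:=\|\bm{\epsilon}\|_{\infty}=\max_{1\leq j\leq N}|\epsilon_j|$ and recall $G=\max_j\|\epsilon_j\|_{\psi_2}$. The argument proceeds from the $\psi_2$-norm definition to a tail bound, then to a tail bound for the maximum, and finally to the expectation via integration of the tail.

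First I would pass from the sub-Gaussian norm to a pointwise tail estimate. Since $G\geq\|\epsilon_j\|_{\psi_2}$, the defining property of the $\psi_2$-norm gives $\mathbb{E}\bigl(\exp(\epsilon_j^2/G^2)\bigr)\leq\mathbb{E}\bigl(\exp(\epsilon_j^2/\|\epsilon_j\|_{\psi_2}^2)\bigr)\leq 2$. Applying Markov's inequality to the nonnegative random variable $\exp(\epsilon_j^2/G^2)$ then yields, for every $u>0$,
\[
\mathbb{P}(|\epsilon_j|>u)=\mathbb{P}\bigl(\exp(\epsilon_j^2/G^2)>\exp(u^2/G^2)\bigr)\leq 2\exp(-u^2/G^2).
\]
Because probability is subadditive, the union bound gives $\mathbb{P}(M>u)\leq\sum_{j=1}^N\mathbb{P}(|\epsilon_j|>u)\leq 2N\exp(-u^2/G^2)$, and this step uses no independence.

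Next I would recover the expectation from the tail through the layer-cake identity $\mathbb{E}(M)=\int_0^\infty\mathbb{P}(M>u)\,\mathrm{d}u$, splitting the integral at the threshold $u_0=G\sqrt{\log N}$. On $[0,u_0]$ I bound $\mathbb{P}(M>u)\leq 1$, contributing $u_0=G\sqrt{\log N}$. On $[u_0,\infty)$ I use the union-bound tail together with the elementary inequality $u^2\geq u_0 u$ for $u\geq u_0$, which turns the Gaussian tail into an exponentially decaying integrand whose integral is $\tfrac{G^2}{u_0}\exp(-u_0^2/G^2)$; since $\exp(-u_0^2/G^2)=N^{-1}$, the factor $N$ from the union bound is cancelled and this part contributes $O(G/\sqrt{\log N})$. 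Combining the two pieces yields $\mathbb{E}(M)\leq G\sqrt{\log N}+2G/\sqrt{\log N}\leq cG\sqrt{\log N}$ for $N\geq 2$, where the second term is reabsorbed using that $\sqrt{\log N}$ is bounded below by $\sqrt{\log 2}$, and $c>0$ is a generic constant collecting the numerical factors.

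The argument is essentially routine, and the only place demanding genuine care is the integration step: the split point must be chosen as $u_0\asymp G\sqrt{\log N}$ precisely so that $\exp(-u_0^2/G^2)=N^{-1}$ exactly offsets the factor $N$ introduced by the union bound, leaving the correct $\sqrt{\log N}$ scaling; a sloppier choice would either inflate the bound or fail to converge. Everything else follows directly from the definition of the $\psi_2$-norm and Markov's inequality.
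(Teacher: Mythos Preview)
Your argument is correct and is the standard maximal-inequality proof for sub-Gaussian variables: tail bound from the $\psi_2$ definition via Markov, union bound, then integrate the tail with the split at $u_0=G\sqrt{\log N}$. The paper itself does not prove this lemma at all---it is simply quoted from Vershynin's \emph{High-Dimensional Probability} (Section~2.5.2)---so there is no in-paper argument to compare against; what you have written is essentially the argument found in that reference.
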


The family of sub-Gaussian random variables contains many classical examples \cite{MR3837109}. For instance, if $\epsilon_j\in\mathcal{N}(0,\sigma_j^2)$ is a Gaussian random variable with mean zero and variance $\sigma_j^2$, we have $\|\epsilon_j\|_{\psi^2}\leq c\sigma_j$, and hence $$\mathbb{E}(\|\bm{\epsilon}\|_{\infty})\leq c\max_{j=1,\ldots,N}\sigma_j\sqrt{\log{N}},$$ which describes a wide range of Gaussian noise. If $\epsilon_j$ is a single impulse, that is, $\epsilon_j=a_j$ with probability $b_j$ and $\epsilon_j=0$ with probability $1-b_j$, then $\|\epsilon_j\|_{\psi^2}=a_j/\sqrt{\ln(2/b_j)}$, and hence $$\mathbb{E}(\|\bm{\epsilon}\|_{\infty})\leq c\max_{j=1,\ldots,N}\frac{a_j}{\sqrt{\ln(2/b_j)}}\sqrt{\log{N}}.$$
One can also obtain similar bounds if $\epsilon_j$ is a multiple impulse by calculating the sub-Gaussian norm of $\epsilon_j$. These bounds cover the case of impulse noise (also known as salt-and-pepper noise in imaging science). More generally, as long as $\epsilon_j$ is a bounded random variable with $|\epsilon_j|\leq d_j$, we can obtain $\|\epsilon_j\|_{\psi^2}\leq d_j/\sqrt{\ln{2}}$, and hence $$\mathbb{E}(\|\bm{\epsilon}\|_{\infty})\leq c\max_{j=1,\ldots,N}\frac{d_j}{\sqrt{\ln{2}}}\sqrt{\log{N}}.$$
Moreover, we can note that the estimation \eqref{equ:noiseestimation} is also valid for mixed noise, so long as $\epsilon_1,\ldots,\epsilon_N$ are all sub-Gaussian random variables and $G=\max_j\|\epsilon_j\|_{\psi_2}$.

\subsection{The case of smooth functions}\label{sec:discussion}
We now set up particular estimations on terms $E_L(f)$ and $\|\varphi^*-\mathcal{L}_L^{\lambda}\varphi^*\|_{2}$ if $f$ is assumed to be blessed with additional smoothness. To measure the smoothness, it is convenient to introduce a H\"{o}lder space 
\begin{equation*}
\mathcal{C}^{k,\zeta}(\Omega):=\left\{g\in\mathcal{C}^k(\Omega):D^mg \text{ is }\zeta-\text{H\"{o}lder continuous } \forall m\text{ with }|m|=k\right\}
\end{equation*}
such that $f\in\mathcal{C}^{k,\zeta}(\Omega)$, where $D$ is a differential operator and $m$ ranges over multi-indices; and $f$ could also be considered in a Sobolev space $H^{k+\zeta+s/2}$, which is continuously embedded in $\mathcal{C}^{k,\zeta}$ \cite{sobolev}. Note that it is not necessary to assume any additional smoothness on $f^{\epsilon}$, which shall still belong to $\mathcal{C}(\Omega)$. Then the term $E_L(f)$ in both bounds \eqref{equ:lassoerror} and \eqref{equ:lassoerrornoise} can be quantified by $k$ with the aid of some Jackson type theorems \cite{MR2511061,ragozin1970polynomial,ragozin1971constructive}. Generally speaking, for $f\in\mathcal{C}^{k,\zeta}(\Omega)$ with $0<\zeta\leq1$, there exists $C(k,\zeta,s)>0$, which depends only on $k$, $\zeta$, and $s$, such that \cite{ragozin1970polynomial}
\begin{equation}\label{equ:best}
E_L(f)\leq C(k,\zeta,s)\|f\|_{k,\zeta}L^{-k-\zeta}=\mathcal{O}(L^{-k-\zeta}),
\end{equation}
where $$\|f\|_{k,\zeta}=\sum_{|m|\leq k}\left\|D^mf\right\|_{\infty}+\sum_{|m|=k}\sup_{\mathbf{x}\neq\mathbf{y}}\frac{|f(\mathbf{x})-f(\mathbf{y})|}{\|\mathbf{x}-\mathbf{y}\|_2^{\zeta}}$$
and $m$ ranges over multi-indices. To obtain \eqref{equ:best}, it is assumed that the $k$th derivative of $f$ satisfies a $\zeta$-H\"{o}lder condition. This modulus was also generalized in \cite{ragozin1970polynomial}, but the convergence rate $\mathcal{O}(L^{-k-\zeta})$ is not affected. In particular, if $f\in\mathcal{C}^k([-1,1])$ and if $|f^{k}(x_1)-f^{(k)}(x_2)|\leq M_k|x_1-x_2|^{\zeta}$ for some $M_k>0$ and $\zeta\in(0,1]$, Jackson theorem \cite[Theorem 3.7.2]{MR2511061} asserts $E_L(f)=\max_{-1\leq x\leq 1}|f(x)-\varphi^*(x)|\leq C(k,\zeta)M_kL^{-k-\zeta}$ for some $C(k,\zeta)>0$, which depends only on $k$ and $\zeta$ (as $s=1$). If $\Omega$ is a cube or a multi-dimensional torus, one can also find similar Jackson type estimations in \cite[Section 6.4]{MR0213785}. 

If $\Omega$ is blessed with some additional geometric properties, then the requirement $f\in\mathcal{C}^{k,\zeta}(\Omega)$ may be relaxed to $f\in\mathcal{C}^{k}(\Omega)$ and the bound for $E_{L}(f)$ becomes 
\begin{equation}\label{equ:additionalgeometric}
E_L(f)\leq\mathcal{O}(L^{-k})
\end{equation} 
correspondingly. For example, if $\Omega$ is a homogeneous submanifold (including spheres and projective spaces), then there exists a polynomial such that the $L_2$ distance from $f$ to this polynomial is bounded by $\mathcal{O}(L^{-k})$, hence the bound \eqref{equ:additionalgeometric} is valid for $E_L(f)$ \cite{ragozin1971constructive}. In particular, if $f\in\mathcal{C}^k(\mathbb{S}^{s-1})$, where $\mathbb{S}^{s-1}\subset\mathbb{R}^s$ is a unit $(s-1)$-sphere, a Jackson type theorem \cite[Theorem 3.3]{ragozin1971constructive} asserts $E_L(f)$ satisfies \eqref{equ:additionalgeometric}. Besides, though the closed unit $s-$ball $\mathbb{B}^s\subset\mathbb{R}^s$ is not a homogeneous submanifold, a Jackson type theorem can be also derived based on results on the unit sphere \cite[Theorem 3.4]{ragozin1971constructive}, which states that the bound \eqref{equ:additionalgeometric} is also valid for $f\in\mathcal{C}^k(\mathbb{B}^s)$. For detailed mathematical derivation and constants used in $\mathcal{O}(L^{-k})$, we refer to \cite{ragozin1971constructive}.
\begin{remark}
In this paper, $E_L(f)$ is defined in the sense of uniform norm. As we mentioned above, $f$ can be considered in some Sobolev spaces $H^t(\Omega)$ (or more generally $W^{t,p}(\Omega)$) continuously embedded in $\mathcal{C}^{k,\zeta}(\Omega)$. Thus in the literature of hyperinterpolation (mainly on spheres), errors and $E_L(f)$ were also studied in the $H^t(\Omega)$ sense of Sobolev norm $\|\cdot\|_{H^t(\Omega)}$ with $E_L(f):=\inf_{p\in\mathbb{P}_L}\|f-p\|_{H^t(\Omega)}$. We refer to \cite{dai2011polynomial,MR2274179} for details about this topic.
\end{remark}

Finally we examine and estimate the regularization error $\|\varphi^*-\mathcal{L}_L^{\lambda}\varphi^*\|_{2}$ with $f\in\mathcal{C}^{k,\zeta}(\Omega)$, $0<\zeta\leq1$. No matter when $f\in\mathcal{C}(\Omega)$ or $\mathcal{C}^{k,\zeta}(\Omega)$, this term will not vanish unless $\lambda\rightarrow0$ and $f=f^{\epsilon}$, which is a fact verified in many previous works, see, e.g. \cite{lin2019distributed,pereverzev2015parameter}. This term essentially depends on $f$ through the medium of its best uniform approximation polynomial $\varphi^*$. If $\varphi^*$ is constructed as $\varphi^*=\sum_{\ell=1}^dc_{\ell}p_{\ell}$, then corresponding coefficients of $\mathcal{L}^{\lambda}_L\varphi^*$ are $c_{\ell}-\lambda\mu_{\ell}$ if $c_{\ell}>\lambda\mu_{\ell}$, $c_{\ell}+\lambda\mu_{\ell}$ if $c_{\ell}<-\lambda\mu_{\ell}$, and $0$ if $|c_{\ell}|\leq\lambda\mu_{\ell}$. Thus with the aid of Parseval's identity in $\mathbb{P}_L$, we have
\begin{equation}\label{equ:boundingregularizationerror}
\|\varphi^*-\mathcal{L}^{\lambda}_L\varphi^*\|_2=\left(\sum_{\ell=1,~|c_{\ell}|\leq\lambda\mu_{\ell}}^d|c_{\ell}|^2+\sum_{\ell=1,~|c_{\ell}|>\lambda\mu_{\ell}}^d|\lambda\mu_{\ell}|^2\right)^{1/2}.
\end{equation} 
Here comes an immediate but rough bound $\|\varphi^*-\mathcal{L}^{\lambda}_L\varphi^*\|_2\leq(\sum_{\ell=1}^d|c_{\ell}|^2)^{1/2}=\|\varphi^*\|_2$, but we are going to derive a sharper bound for it in consideration of function smoothness and regularization parameters. 
\begin{lemma}\label{lem:regularizationerror}
Adopt conditions of Theorem \ref{prop:rule} and let $f\in\mathcal{C}^{k,\zeta}(\Omega)$ with $0<\zeta\leq1$. Let $\varphi^*=\sum_{\ell=1}^dc_{\ell}p_{\ell}\in\mathbb{P}_L$ be the best approximation of $f$ in the sense of uniform norm, and let $\mathcal{L}_L^{\lambda}\varphi^*\in\mathbb{P}_L$ be defined by \eqref{equ:lassohyperinterpolation}. Then 
\begin{equation*}
\|\varphi^*-\mathcal{L}_L^{\lambda}\varphi^*\|_{2}\leq \left[V\left(\|f\|_{\infty}+C\|f\|_{k,\zeta}L^{-k-\zeta}\right)^2-\chi\right]^{1/2},
\end{equation*}
where $V=\int_{\Omega}\rm{d}\omega$, $C:=C(k,\zeta,s)$ is some constant which depends only on $k$, $\zeta$ and $s$, and $$\chi:=\sum_{\ell=1,~|c_{\ell}|>\lambda\mu_{\ell}}^d\left(|c_{\ell}|^2-|\lambda\mu_{\ell}|^2\right).$$  

\end{lemma}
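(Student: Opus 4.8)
The plan is to start directly from the explicit expansion \eqref{equ:boundingregularizationerror} of the regularization error, which is available because $\varphi^*\in\mathbb{P}_L$ has a known coefficient vector $(c_1,\dots,c_d)$ and the soft threshold operator acts coordinatewise. Squaring that identity gives
\begin{equation*}
\|\varphi^*-\mathcal{L}^{\lambda}_L\varphi^*\|_2^2=\sum_{\ell=1,~|c_{\ell}|\leq\lambda\mu_{\ell}}^d|c_{\ell}|^2+\sum_{\ell=1,~|c_{\ell}|>\lambda\mu_{\ell}}^d|\lambda\mu_{\ell}|^2.
\end{equation*}
The key algebraic observation, and the only mildly clever step, is to compare this against Parseval's identity $\|\varphi^*\|_2^2=\sum_{\ell=1}^d|c_\ell|^2$. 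Splitting the latter sum over the same two index sets $\{|c_\ell|\leq\lambda\mu_\ell\}$ and $\{|c_\ell|>\lambda\mu_\ell\}$ and subtracting, the contributions over the first set cancel exactly, leaving
\begin{equation*}
\|\varphi^*\|_2^2-\|\varphi^*-\mathcal{L}^{\lambda}_L\varphi^*\|_2^2=\sum_{\ell=1,~|c_{\ell}|>\lambda\mu_{\ell}}^d\left(|c_{\ell}|^2-|\lambda\mu_{\ell}|^2\right)=\chi,
\end{equation*}
so that $\|\varphi^*-\mathcal{L}^{\lambda}_L\varphi^*\|_2^2=\|\varphi^*\|_2^2-\chi$.

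It then remains only to bound $\|\varphi^*\|_2^2$ from above. First I would apply the Cauchy--Schwarz estimate already used in the proof of Theorem \ref{prop:noisefree}, namely $\|\varphi^*\|_2\leq V^{1/2}\|\varphi^*\|_{\infty}$. Next I would control $\|\varphi^*\|_{\infty}$ by the triangle inequality together with the defining property of the best approximation: $\|\varphi^*\|_{\infty}\leq\|f\|_{\infty}+\|f-\varphi^*\|_{\infty}=\|f\|_{\infty}+E_L(f)$. Finally, invoking the smoothness hypothesis $f\in\mathcal{C}^{k,\zeta}(\Omega)$ and the Jackson-type bound \eqref{equ:best} yields $E_L(f)\leq C(k,\zeta,s)\|f\|_{k,\zeta}L^{-k-\zeta}$, whence $\|\varphi^*\|_2^2\leq V\left(\|f\|_{\infty}+C\|f\|_{k,\zeta}L^{-k-\zeta}\right)^2$.

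Combining the identity with this upper bound and taking square roots delivers exactly the claimed estimate. I do not expect a genuine obstacle here: the argument is a short chain of standard inequalities, and the substantive work has already been done upstream in deriving \eqref{equ:boundingregularizationerror} and in the Jackson estimate \eqref{equ:best}, both of which I am free to assume. The only point requiring a little care is the telescoping cancellation over the threshold-partitioned index sets, where one must keep the two sums $\{|c_\ell|\leq\lambda\mu_\ell\}$ and $\{|c_\ell|>\lambda\mu_\ell\}$ aligned so that the subtraction isolates precisely $\chi$; beyond that bookkeeping, everything is routine.
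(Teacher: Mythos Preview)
Your proposal is correct and follows essentially the same route as the paper: first establish the exact identity $\|\varphi^*-\mathcal{L}^{\lambda}_L\varphi^*\|_2^2=\|\varphi^*\|_2^2-\chi$ from \eqref{equ:boundingregularizationerror} and Parseval, then bound $\|\varphi^*\|_2^2\leq V\|\varphi^*\|_{\infty}^2\leq V(\|f\|_{\infty}+C\|f\|_{k,\zeta}L^{-k-\zeta})^2$ via Cauchy--Schwarz, the triangle inequality, and the Jackson estimate \eqref{equ:best}. The paper's proof is terser but logically identical; you have merely made the telescoping over the threshold-partitioned index sets explicit.
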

\begin{proof}
Comparing with the rough bound $\|\varphi^*-\mathcal{L}^{\lambda}_L\varphi^*\|_2\leq\|\varphi^*\|_2$, we exactly have 
\begin{equation*}
\|\varphi^*-\mathcal{L}^{\lambda}_L\varphi^*\|_2^2= \|\varphi^*\|_2^2-\chi.
\end{equation*}
Since $\|\varphi^*\|_2^2\leq V \|\varphi^*\|^2_{\infty} \leq V\left(\|f\|_{\infty}+C\|f\|_{k,\zeta}L^{-k-\zeta}\right)^2$, the lemma is proved.
\end{proof}
\begin{remark}
Lemma \ref{lem:regularizationerror} decomposes $\|\varphi^*-\mathcal{L}_L^{\lambda}\varphi^*\|_{2}^2$ into two parts, one depends only on $f$ itself, the other is related to regularization settings. If $f$ is smoother, characterized by a larger $k$, then $\|\varphi^*-\mathcal{L}^{\lambda}_L\varphi^*\|_2$ becomes smaller. But $\|\varphi^*-\mathcal{L}^{\lambda}_L\varphi^*\|_2$ is inversely related to $\lambda$ and $\mu_{\ell}$, which are inversely related to $\chi$.
\end{remark}

Consequently, error bounds \eqref{equ:lassoerror} and \eqref{equ:lassoerrornoise} can be improved as follows.
\begin{theorem}\label{thm:improvedbounds}
Adopt conditions of Theorem \ref{prop:rule} and Lemma \ref{lem:noiseestimation}. Let $f\in\mathcal{C}^{k,\zeta}(\Omega)$ with $0<\zeta\leq 1$ and let $f^{\epsilon}\in\mathcal{C}(\Omega)$ be a noisy version of $f$. Then 
\begin{equation*}\begin{split}
\|\mathcal{L}^{\lambda}_Lf-f\|_2\leq&\left[(1+\tau_2)V^{1/2}C\|f\|_{k,\zeta}\right]L^{-k-\zeta}\\
&+\left[V\left(\|f\|_{\infty}+C\|f\|_{k,\zeta}L^{-k-\zeta}\right)^2-\chi\right]^{1/2},
\end{split}\end{equation*}
and
\begin{equation}\label{equ:generalerrorinexpectation}\begin{split}
\mathbb{E}(\|\mathcal{L}_L^{\lambda}f^{\epsilon}-f\|_2)\leq &c\tau_3 V^{1/2}G\sqrt{\log{N}}+\left[(1+\tau_3)V^{1/2}C\|f\|_{k,\zeta}\right]L^{-k-\zeta}\\
&+\left[V\left(\|f\|_{\infty}+C\|f\|_{k,\zeta}L^{-k-\zeta}\right)^2-\chi\right]^{1/2},
\end{split}\end{equation}
where $c$ is a generic constant; $V=\int_{\Omega}\rm{d}\omega$; $C:=C(k,\zeta,s)>0$ depends on $k,\zeta$ and $s$; $\chi:=\sum_{\ell=1,~|c_{\ell}|>\lambda\mu_{\ell}}^d\left(|c_{\ell}|^2-|\lambda\mu_{\ell}|^2\right)$; $G$ could be determined analytically if the type of noise is known; $\tau_2<1$ depends on $f$ and is inversely related to $K(f-\varphi^*)$; and $\tau_3<1$ depends on $f$ and $f^{\epsilon}$ and is inversely related to $K(f^{\epsilon}-\varphi^*)$.
\end{theorem}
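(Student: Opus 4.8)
The plan is to obtain both improved estimates by specializing the abstract error bounds already proved to the smooth class $\mathcal{C}^{k,\zeta}(\Omega)$, replacing the two unquantified quantities $E_L(f)$ and $\|\varphi^*-\mathcal{L}_L^{\lambda}\varphi^*\|_2$ by the concrete estimates now at our disposal. No new analytic machinery is required; the work is entirely one of substitution and, for the second bound, of passing to expectations.

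First, for the noise-free inequality I would start from \eqref{equ:lassoerror} in Theorem \ref{prop:noisefree}, which states $\|\mathcal{L}^{\lambda}_Lf-f\|_2\leq(1+\tau_2)V^{1/2}E_{L}(f)+\|\varphi^*-\mathcal{L}_L^{\lambda}\varphi^*\|_{2}$. Into the first summand I would insert the Jackson-type estimate \eqref{equ:best}, which is available because $f\in\mathcal{C}^{k,\zeta}(\Omega)$ and gives $E_L(f)\leq C(k,\zeta,s)\|f\|_{k,\zeta}L^{-k-\zeta}$; this produces the term $[(1+\tau_2)V^{1/2}C\|f\|_{k,\zeta}]L^{-k-\zeta}$. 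Into the second summand I would insert Lemma \ref{lem:regularizationerror}, which supplies $\|\varphi^*-\mathcal{L}_L^{\lambda}\varphi^*\|_2\leq[V(\|f\|_{\infty}+C\|f\|_{k,\zeta}L^{-k-\zeta})^2-\chi]^{1/2}$ verbatim. Summing the two gives the first claimed bound.

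Next, for the expectation bound I would begin from \eqref{equ:lassoerrornoise} in Theorem \ref{thm:noisybound} and take expectations of both sides. Its second and third summands are treated exactly as above and are deterministic, since $\varphi^*$ is the best polynomial approximation of the fixed function $f$ and so carries no noise; hence the Jackson estimate \eqref{equ:best} and Lemma \ref{lem:regularizationerror} apply unchanged. The only genuinely new step concerns the noise term $\tau_3V^{1/2}\|f-f^{\epsilon}\|_{\infty}$: adopting the deterministic noise model of Section \ref{sec:noise}, under which $\|f-f^{\epsilon}\|_{\infty}=\|\bm{\epsilon}\|_{\infty}$, and invoking Lemma \ref{lem:noiseestimation} for sub-Gaussian $\epsilon_j$ yields $\mathbb{E}(\|\bm{\epsilon}\|_{\infty})\leq cG\sqrt{\log N}$, so this term is at most $c\tau_3V^{1/2}G\sqrt{\log N}$. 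Collecting the three contributions gives \eqref{equ:generalerrorinexpectation}.

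The one point that needs care, which I would flag rather than treat as a deep obstacle, is the role of $\tau_3$ under the expectation. Because $\tau_3$ is inversely related to $K(f^{\epsilon}-\varphi^*)$ and $f^{\epsilon}$ is the noisy (random) sample, $\tau_3$ is in principle random, so one cannot literally factor it out of $\mathbb{E}(\tau_3\|f-f^{\epsilon}\|_{\infty})$. I would resolve this in accordance with the statement by reading $\tau_3<1$ as a fixed uniform bound strictly below $1$ and pulling it outside the expectation before applying Lemma \ref{lem:noiseestimation}; with that convention every remaining step is a direct substitution and both bounds follow immediately.
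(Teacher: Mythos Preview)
Your proposal is correct and follows essentially the same approach as the paper: the paper's proof is a one-sentence appeal to the error decompositions \eqref{equ:lassoerror} and \eqref{equ:lassoerrornoise}, into which Lemma~\ref{lem:noiseestimation}, Lemma~\ref{lem:regularizationerror}, and the Jackson estimate \eqref{equ:best} are substituted. Your treatment of the randomness of $\tau_3$ is in fact more careful than the paper's own proof, which simply pulls $\tau_3$ outside the expectation without comment.
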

\begin{proof}
Based on error decompositions in \eqref{equ:lassoerror} and \eqref{equ:lassoerrornoise}, both improved error bounds can be obtained by using Lemmas \ref{lem:noiseestimation} and \ref{lem:regularizationerror}, and estimation \eqref{equ:best}.
\end{proof}
\begin{remark}
Our error bound \eqref{equ:generalerrorinexpectation} consists of three terms. The first term is related to the level of noise, which will tend to zero if $f^{\epsilon}\rightarrow f$; the second term is an essential part in almost every approximation scheme, converging to zero as $L\rightarrow \infty$; and the third term depends on our regularization settings, which cannot converge. These findings on convergence and misconvergence also apply for \eqref{equ:lassoerrornoise} when $f\in\mathcal{C}(\Omega)$.
\end{remark}

\section{Examples}\label{sec:example}
We consider four concrete examples of $\mathcal{L}_L^{\lambda}$ with certain quadrature rules: on the interval $[-1,1]\subset\mathbb{R}$ , on the unit disc $\{(x_1,x_2)\in\mathbb{R}^2:x_1^2+x_2^2\leq1\}$, on the unit sphere $\mathbb{S}^2:=\{\mathbf{x}=(x,y,z)^{\text{T}}\in\mathbb{R}^3:x^2+y^2+z^2=1\}\subset\mathbb{R}^3$, and in the unit cube $[-1,1]^3\subset\mathbb{R}^3$ as well. For each example, we state the quadrature rule and the value of $V$, thus error bounds of $\mathcal{L}_L^{\lambda}$ can be obtained immediately. We will also compare $\mathcal{L}^{\lambda}_L$ with filtered hyperinterpolation $\mathcal{F}_L$ (cf. Section \ref{sec:filtered}) and Tikhonov regularized least squares approximation $\mathcal{T}_L$, which can be obtained by using regularization term $\|\bm{\alpha}\|_2^2/2$ rather than $\|\bm{\alpha}\|_1$ in \eqref{equ:lassoapproximationconstrained}. We refer to \cite{an2012regularized,an2020tikhonov,MR4118851,pereverzev2015parameter} for this topic. Except for examples on the sphere (explanation will be made from the context), $\mathcal{T}_L$ adopts the same regularization parameters as $\mathcal{L}_L^{\lambda}$ in our numerical experiments.

\subsection{The interval}\label{sec:interval}
We take $\Omega=[-1,1]$ with $\text{d}\omega=\omega(x)\text{d}x$, where $\omega(x)\geq0$ is a weight function on $[-1,1]$ and different $\omega(x)$ indicates different value of $V=\int_{-1}^1\omega(x)\text{d}x$. In this case, $\mathbb{P}_L$ is a linear space of polynomials of degree at most $L$ on the interval $[-1,1]$, and hence $d=L+1$.

Fix $L$ as the degree of Lasso hyperinterpolation polynomial, and let $\{p_{\ell}:0\leq\ell\leq L\}$ be a family of normalized orthogonal polynomials on $[-1,1]$ with respect to a weight function $\omega(x)$, and $p_{\ell}$ is of degree $\ell$ \cite{gautschi2004orthogonal}. For $N\geq L+1$, let $\{x_j\}_{j=1}^N$ and $\{w_j\}_{j=1}^N$ be Gauss quadrature points and Gauss quadrature weights, respectively. Due to the exactness of Gauss quadrature \cite{trefethen2013approximation,xiang2012error}, it is ensured that
\begin{equation*}
\sum_{j=1}^Nw_jg(x_j)=\int_{-1}^1\omega(x)g(x)\text{d}x\quad\forall g\in\mathbb{P}_{2N-1}.
\end{equation*}
Here we take Gauss quadrature as an example, and its variants Gauss-Lobatto and Gauss-Radau quadrature may also be considered \cite{gautschi2004orthogonal}. Then the Lasso hyperinterpolation \eqref{equ:lassohyperinterpolation} becomes
\begin{equation}\label{equ:lassohyperinterpolationoninterval}
\mathcal{L}_L^{\lambda}f:=\sum_{\ell=0}^L\mathcal{S}_{\lambda\mu_{\ell}}\left(\sum_{j=1}^Nw_jf(x_j)p_{\ell}(x_j)\right)p_{\ell}.
\end{equation}

When $\lambda\rightarrow0$ and $f^{\epsilon}=f$, our error bounds of $\mathcal{L}_L^{\lambda}$ reduce into $\|\mathcal{L}_Lf-f\|_{2}\leq2V^{1/2}E_{L}(f)$, which quantifies the error of interpolation if $N=L+1$, known as Erd{\H{o}}s-Tur{\'a}n bound \cite{erdHos1937interpolation}, and is valid for hyperinterpolation if $N>L+1$, proved by Sloan \cite{sloan1995polynomial}.

Figure \ref{figure1} provides a concrete example on the approximation of function $f(x)=\exp(-x^2)$ in the presence of Gaussian noise $\epsilon_j\in\mathcal{N}(0,\sigma^2)$ with $\sigma=0.15$, via $\mathcal{T}_{L}$, $\mathcal{F}_L$, and $\mathcal{L}^{\lambda}_{L}$. We set $N = 300$, $L = 250$, $\lambda=10^{-1}$ and all $\mu_{\ell}$ be 1, and we adopt normalized Legendre polynomials to approximate $f$ in this experiment, which reports excellent denoising ability of $\mathcal{L}^{\lambda}_L$. 
\begin{figure}[htbp]
  \centering
  \includegraphics[width=\textwidth]{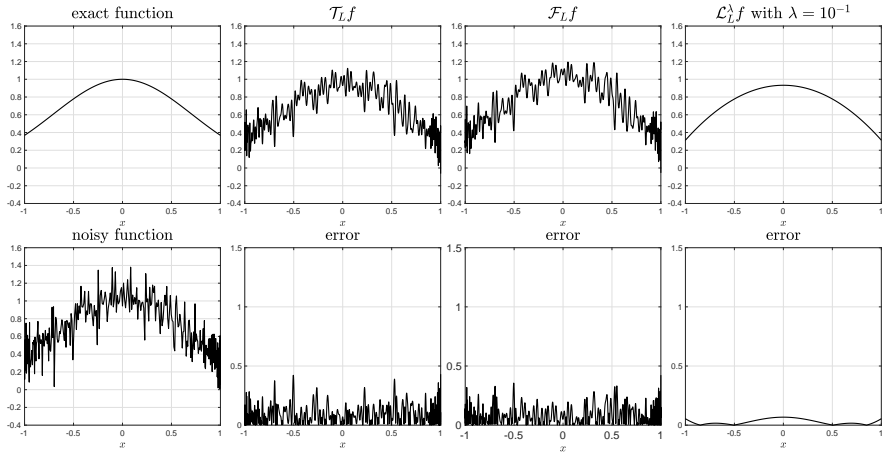}\\
  \caption{Approximation results of $f(x)=\exp(-x^2)$ over $[-1,1]$ via $\mathcal{T}_{L}$, $\mathcal{F}_{L}$, and $\mathcal{L}^{\lambda}_{L}$.}\label{figure1}
\end{figure}

Table \ref{tab:interval} reports the $L_2$ approximation errors of the same function but with respect to different $\sigma$ which describes the level of noise and different $\lambda$. Besides, the sparsity of Lasso hyperinterpolation coefficients is also reported. In this experiment, there should be 251 coefficients in constructing $\mathcal{T}_{250}f$, $\mathcal{F}_{250}f$, and $\mathcal{L}^{\lambda}_{250}f$. For each setting of $\lambda$ and $\sigma$, we test 5 times and report the average values. It is shown in this table that $\mathcal{L}^{\lambda}_{L}$ enjoys the leading position in removing Gaussian noise on the interval. For a fixed level of noise, an appropriate $\lambda=10^{-1}$ leads to $\|\mathcal{L}_L^{\lambda}f^{\epsilon}-f\|_2\approx 0.0731$, which is approximately one third of the $L_2$ errors of $\mathcal{T}_{L}$ and $\mathcal{F}_{L}$, respectively. For a fixed $\lambda$, unlike $\mathcal{T}_{L}$ and $\mathcal{F}_{L}$, the Lasso hyperinterpolation $\mathcal{L}^{\lambda}_{L}$ shows robustness with respect to the increase in the level of noise. For the sparsity of $\bm{\beta}$, it is illustrated in this table that decreasing $\lambda$ and increasing the level of noise both increase the number of nonzero entries of $\bm{\beta}$.
\begin{table}[htbp]
  \centering
  \setlength{\abovecaptionskip}{0pt}
\setlength{\belowcaptionskip}{10pt}
  \caption{Approximation errors and the sparsity of Lasso hyperinterpolation coefficients of $f(x)=\exp(-x^2)$ over $[-1,1]$ via $\mathcal{T}_{L}$, $\mathcal{F}_{L}$, and $\mathcal{L}^{\lambda}_{L}$, with different values of $\lambda$ and different standard derivation $\sigma$ of Gaussian noise added on.}\label{tab:interval}
  \begin{tabular}{c|cccc|cccc}
   \cline{1-9} 
    ~ & \multicolumn{4}{c|}{$\sigma=0.2$, and $\lambda$ takes} & \multicolumn{4}{c}{$\lambda=10^{-1}$, and $\sigma$ takes} \\
        & $10^{-0.8}$ & $10^{-1}$ & $10^{-1.5}$ & $10^{-2}$  & 0.1 & 0.15 & 0.2 & 0.25  \\ \hline
    \small{Tikhonov}   & 0.2645  &  0.2369  &  0.2431  &  0.2450  &  0.1462  &  0.1831  &  0.2440  &  0.2867 \\
    Filtered  & 0.2097  &  0.2161  &  0.2202  &  0.2150  &  0.1057  &  0.1536  &  0.2236  &  0.2663 \\
    Lasso     & 0.1454  &  0.0731  &  0.1114  &  0.2017  &  0.0811  &  0.0733  &  0.0802  & 0.0890 \\
    \hline
    $\|\bm{\beta}\|_0$ & 2 & 2.8 & 89.8 & 192.6 & 2 & 2.2 & 2.8 & 6.2\\
    \hline
  \end{tabular}
\end{table}

\subsection{The disc}
We then consider $\Omega=\{\mathbf{x}\in\mathbb{R}^2:\mathbf{x}=(x_1,x_2)\text{ and }x_1^2+x_2^2\leq1\}$, which is a unit disc on $\mathbb{R}^2$, with $\text{d}\omega=(1/\pi)\text{d}\mathbf{x}$. Thus $$V=\int_{\Omega}\text{d}\omega=\frac{1}{\pi}\int_0^1\int_0^{2\pi}1r\text{d}\theta\text{d}r=1.$$ 
In this case, $\mathbb{P}_L:=\left\{\sum_{j=0}^L\sum_{k=0}^jb_{jk}x_1^kx_2^{j-k}:b_{jk}\in\mathbb{R}\right\}$ is a linear space of polynomials of degree at most $L$ on the unit disc, and hence $d=\binom{L+2}{2}=(L+2)(L+1)/2$.

Fix $L$ as the degree of Lasso hyperinterpolation polynomial, and let $\{\Lambda_{\ell}:1\leq \ell\leq (L+2)(L+1)/2\}$ be a family of ridge polynomials on the unit disc, which were introduced by Logan and Shepp \cite{logan1975optimal}. If we write $\mathbf{x}=(r,\theta)$, where $r$ and $\theta$ are the radius and azimuthal directions of $\mathbf{x}$, respectively, then the discrete inner product \eqref{equ:discreteinnerproduct} can be expressed as 
\begin{equation*}\begin{split}
\left<v,z\right>_{N}:=&\frac{1}{\pi}\sum_{j=0}^N\sum_{m=0}^{2N}v\left(r_j,\frac{2\pi m}{2N+1}\right)z\left(r_j,\frac{2\pi m}{2N+1}\right)w_{j}\frac{2\pi}{2N+1}r_j\\
&=\sum_{j=0}^N\sum_{m=0}^{2N}v\left(r_j,\frac{2\pi m}{2N+1}\right)z\left(r_j,\frac{2\pi m}{2N+1}\right)w_{j}\frac{2}{2N+1}r_j,
\end{split}\end{equation*}
where we use the trapezoidal rule for the azimuthal direction and the Gauss-Legendre quadrature rule over $[0,1]$ for the radial direction, that is, $\{r_j\}_{j=0}^N$ and $\{w_j\}_{j=0}^N$ are Gauss-Legendre quadrature points and weights, respectively. Such an inner product was constructed in \cite{hansen2009norm}, and is exact for all $v,z\in\mathbb{P}_N$. Hence 
\begin{equation*}
\sum_{j=0}^N\sum_{m=0}^{2N}g\left(r_j,\frac{2\pi m}{2N+1}\right)w_{j}\frac{2}{2N+1}r_j=\frac{1}{\pi}\int_0^1\int_0^{2\pi}g(r,\theta)r\text{d}\theta\text{d}r \text{ for all }g\in\mathbb{P}_{2N}.
\end{equation*}

Then the Lasso hyperinterpolation \eqref{equ:lassohyperinterpolation} becomes
\begin{equation*}
\mathcal{L}_L^{\lambda}f:=\sum_{\ell=0}^L\mathcal{S}_{\lambda\mu_{\ell}}\left(\sum_{j=0}^N\sum_{m=0}^{2N}f\left(r_j,\frac{2\pi m}{2N+1}\right)\Lambda_{\ell}\left(r_j,\frac{2\pi m}{2N+1}\right)w_{j}\frac{2}{2N+1}r_j\right)\Lambda_{\ell}.
\end{equation*}

Figure \ref{figure2} displays the approximation results of function $f(x_1,x_2)=(1-(x_1^2+x_2^2))\exp(x_1\cos(x_2))$ contaminated by some single impulse noise, via $\mathcal{T}_{L}$, $\mathcal{F}_{L}$, and $\mathcal{L}^{\lambda}_{L}$. This kind of noise $\epsilon_j$, in our experiments, takes a uniformly distributed random values in $[-a,a]$ with probability $1/2$, which is generated by MATLAB command \texttt{a*(1-2*rand(1))*binornd(1,0.5)}. In Figure \ref{figure2} we let $a = 3.5$. This function is the true solution of a nonlinear Poisson equation as seen in \cite{atkinson2005solving}, which was solved by hyperinterpolation-based spectral methods in \cite{hansen2009norm}. We set $N = 135$ (136 quadrature points), $L = 16$, $\lambda=10^{-1.5}$ and all $\mu_{\ell}$ be 1 in this experiment, which also reports exciting denoising ability of $\mathcal{L}^{\lambda}_L$. Errors near the boundary of the disc are in a good agreement with the theoretical analysis in \cite[Section 4]{hansen2009norm} that for a given $L$, pointwise errors near the boundary are larger than those around the center of the disc.
\begin{figure}[htbp]
  \centering
  \includegraphics[width=\textwidth]{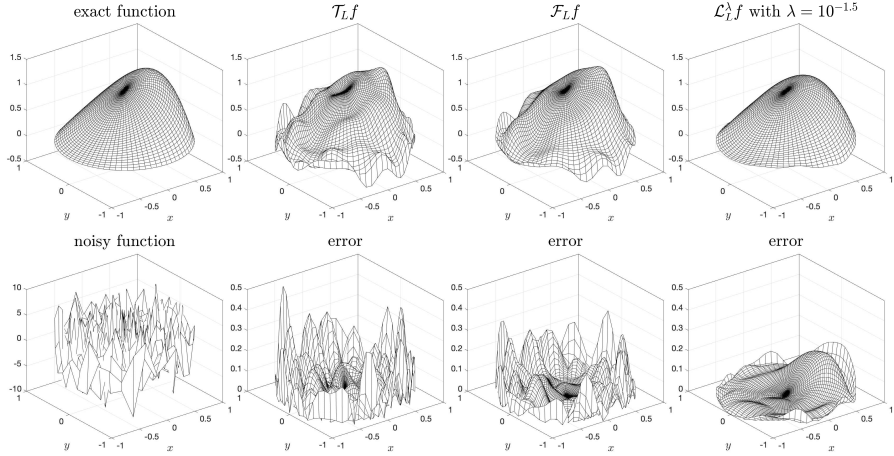}\\
  \caption{Approximation results of $f(x_1,x_2)=(1-(x_1^2+x_2^2))\exp(x_1\cos(x_2))$ over the unit disc via $\mathcal{T}_{L}$, $\mathcal{F}_{L}$, and $\mathcal{L}^{\lambda}_{L}$.}\label{figure2}
\end{figure}

Table \ref{tab:disc} reports the $L_2$ approximation errors of the same function but with respect to different values of $\lambda$ and different level $a$ of noise. Besides, the sparsity of Lasso hyperinterpolation coefficients is also reported. In this experiment, there should be 153 coefficients in constructing $\mathcal{T}_{16}f$, $\mathcal{F}_{16}f$, and $\mathcal{L}_{16}^{\lambda}f$. For each setting of $\lambda$ and $a$, we test 5 times and report the average values. Table \ref{tab:disc} asserts the denoising ability of $\mathcal{L}^{\lambda}_{L}$ with respect to impulse noise and the robustness of $\mathcal{L}^{\lambda}_{L}$ with respect to the increasing level of noise. We also note that increasing the level of impulse noise may not increase the number of nonzero entries of $\bm{\beta}$ as significantly as the case of intervals.
\begin{table}[htbp]
  \centering
  \setlength{\abovecaptionskip}{0pt}
\setlength{\belowcaptionskip}{10pt}
  \caption{Approximation errors and the sparsity of Lasso hyperinterpolation coefficients of $f(x_1,x_2)=(1-(x_1^2+x_2^2))\exp(x_1\cos(x_2))$ over a unit disc via $\mathcal{T}_{L}$, $\mathcal{F}_{L}$, and $\mathcal{L}^{\lambda}_{L}$, with different values of $\lambda$ and different values of $a$ used in generating single impulse noise.}\label{tab:disc}
  \begin{tabular}{c|cccc|cccc}
  \cline{1-9}
     & \multicolumn{4}{c|}{$a=3.5$, and $\lambda$ takes} & \multicolumn{4}{c}{$\lambda=10^{-1.2}$, and $a$ takes} \\
        & $10^{-1.5}$ & $10^{-1.3}$ & $10^{-1.1}$ & $10^{-0.9}$  & 2.5 & 3 & 3.5 & 4  \\ \hline
    \small{Tikhonov}   & 0.4424  &  0.4296  &  0.3899  &  0.3616  &  0.4133  &  0.4200  &  0.4101  &  0.4088 \\
    Filtered  & 0.4840  &  0.4948  &  0.4864  &  0.5080  &  0.4942  &  0.5019  &  0.4887  &  0.4880 \\
    Lasso     & 0.4234  &  0.3947  &  0.3455  &  0.2922  &  0.3745  &  0.3832  &  0.3694  &  0.3669 \\
    \hline
    $\|\bm{\beta}\|_0$ & 10.4 & 8 & 7.6 & 5.4 & 8 & 8 & 8 & 8\\
    \hline
  \end{tabular}
\end{table}

\subsection{The sphere}
We then take $\Omega=\mathbb{S}^2\subset\mathbb{R}^3$ with $\text{d}\omega=\omega(\mathbf{x})\text{d}\mathbf{x}$, where $\omega(\mathbf{x})$ is an area measure on $\mathbb{S}^2$. Since $V=\int_{\mathbb{S}^2}\omega(\mathbf{x})\text{d}\mathbf{x}$ denotes the surface area of $\mathbb{S}^2$, we have $$V=4\pi.$$  Here $\mathbb{P}_L(\Omega):=\mathbb{P}_L(\mathbb{S}^2)$ is the space of spherical polynomials of degree at most $L$. Let the basis be a set of orthonormal spherical harmonics \cite{muller1966spherical} $\{Y_{\ell,k}:\ell=0,1\ldots,L,k=1,\ldots,2\ell+1\}$, and the dimension of $\mathbb{P}_L(\mathbb{S}^2)$ is $d=\dim\mathbb{P}_L=(L+1)^2$. There are many quadrature rules \cite{an2010well,atkinson2019spectral,womersley2001good} satisfying 
\begin{equation}\label{equ:quadraturesphere}
\sum_{j=1}^Nw_jg(\mathbf{x}_j)=\int_{\mathbb{S}^2}g{\rm{d}}\omega\quad\forall g\in\mathbb{P}_{2L}
\end{equation}
for a spherical polynomial $g$, then the Lasso hyperinterpolation \eqref{equ:lassohyperinterpolation} becomes
\begin{equation}\label{equ:lassohyperinterpolationonsphere}
\mathcal{L}_L^{\lambda}f:=\sum_{\ell=0}^L\sum_{k=1}^{2\ell+1}\mathcal{S}_{\lambda\mu_{\ell}}\left(\sum_{j=1}^Nw_jf(\mathbf{x}_j)Y_{\ell,k}(\mathbf{x}_j)\right)Y_{\ell,k}.
\end{equation}
When $\lambda\rightarrow0$ and $f^{\epsilon}=f$, our error bounds of $\mathcal{L}^{\lambda}_L$ reduce into $\|\mathcal{L}_Lf(\mathbf{x})-f(\mathbf{x})\|_2\leq4\pi^{1/2}E_{L}(f)$, which coincides with the bound given by Sloan \cite{sloan1995polynomial}. 

We provide a concrete quadrature for \eqref{equ:quadraturesphere}, \emph{spherical $t$-design}, which was introduced by Delsarte, Goethals, and Seidel \cite{delsarte1991geometriae} in 1977. A point set $\{\mathbf{x}_1,\ldots,\mathbf{x}_N\}\subset\mathbb{S}^2$ is a \emph{spherical $t$-design} if it satisfies
\begin{equation}\label{equ:std}
\frac1N\sum_{j=1}^Np(\mathbf{x}_j)=\frac{1}{4\pi}\int_{\mathbb{S}^2}p(\mathbf{x})\text{d}\omega(\mathbf{x})\quad\forall p\in\mathbb{P}_t.
\end{equation}
In other words, it is a set of points on the sphere such that a equal-weight quadrature rule at these points integrates all (spherical) polynomials up to degree $t$ exactly. 

Figure \ref{figure4} displays the approximation results via $\mathcal{T}_{L}$, $\mathcal{F}_{L}$ and $\mathcal{L}^{\lambda}_{L}$ of a function $f$ defined below, perturbed by mixed Gaussian noise with $\sigma=0.015$ and single impulse noise with $a=0.02$. Let $\mathbf{z}_1=[1,0,0]^{\rm{T}}$, $\mathbf{z}_2=[-1,0,0]^{\rm{T}}$, $\mathbf{z}_3=[0,1,0]^{\rm{T}}$, $\mathbf{z}_4=[0,-1,0]^{\rm{T}}$, $\mathbf{z}_5=[0,0,1]^{\rm{T}}$, and $\mathbf{z}_6=[0,0,-1]^{\rm{T}}$, the testing function $f$ is defined as 
\begin{equation}\label{equ:wendland}
f(\mathbf{x})=\sum_{i=1}^6\phi_2(\|\mathbf{z}_i-\mathbf{x}\|_2),
\end{equation}
where $\phi_2(r):=\tilde{\phi}_2\left(r/\delta_2\right)$ is a normalized Wendland function \cite{chernih2014wendland}, with  $\tilde{\phi}_2(r):=\left(\max\{1-r,0\}\right)^6(35r^2+18r +3)/3$ been an original Wendland function \cite{wendland1995piecewise} and $\delta_2=(9\Gamma(5/2))/(2\Gamma(3))$. In this experiment we employ well conditioned spherical $t$-designs \cite{an2010well}, which are designed to have good geometry properties for integration and regularized least squares approximation \cite{an2012regularized}. Let $\mathcal{X}_{N}$ be a \emph{well conditioned spherical $t$-design} with $N=(t+1)^2$, $L=15$, $t=2L=30$, $\lambda=10^{-2.5}$ and all $\mu_{\ell}$ be 1 \cite{an2010well}. We also use an advanced Tikhonov regularized least squares approximation technique for comparison, which incorporates with Laplace-Beltrami operators in order to reduce noise \cite{an2012regularized}. As Laplace-Beltrami operators are adopted, a relatively small $\lambda=10^{-3.5}$ is desired for $\mathcal{T}_L$. Figure \ref{figure4} shows the denoising ability of Lasso hyperinterpolation in the approximation of $f$, which is considered onerous with respect to noise reduction tasks as of its limits smoothness at the centers $\mathbf{z}_i$ and at the boundary of each cap (see the exact function) with center $\mathbf{z}_i$ \cite{gia2010multiscale,wang2017fully}. 
\begin{figure}[htbp]
  \centering
  \includegraphics[width=\textwidth]{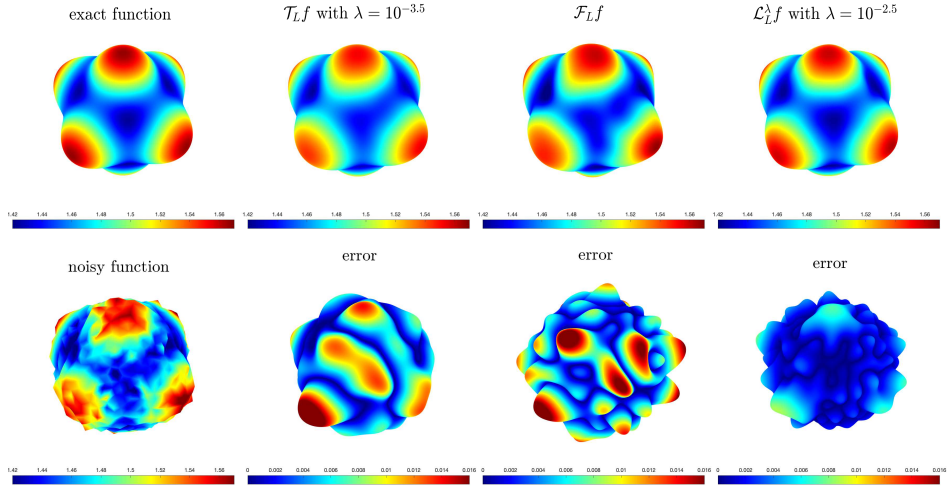}\\
  \caption{Approximation results of $f(\mathbf{x})=\sum_{i=1}^6\phi_2(\|\mathbf{z}_i-\mathbf{x}\|_2)$ over the unit sphere via $\mathcal{T}_{L}$, $\mathcal{F}_{L}$, and $\mathcal{L}^{\lambda}_{L}$.}\label{figure4}
\end{figure}

Table \ref{tab:sphere} reports the $L_2$ approximation errors of the same function but with respect to different values of $\lambda$ and different $\sigma$ which describes the level of Gaussian noise. The level $a=0.02$ of single impulse noise is fixed. Besides, the sparsity of Lasso hyperinterpolation coefficients is also reported. In this experiment, there should be 256 coefficients in constructing $\mathcal{T}_{15}f$, $\mathcal{F}_{15}f$, and $\mathcal{L}_{15}^{\lambda}f$. For each setting of $\lambda$ and $a$, we test 5 times and report the average values. Table \ref{tab:sphere} shows the denoising ability of $\mathcal{L}^{\lambda}_{L}$ with respect to mixed Gaussian noise and impulse noise. Even comparing with the Tikhonov least squares approximation making use of the Laplace-Beltrami operator, which reported satisfying denoising ability in \cite{an2012regularized}, $\mathcal{L}^{\lambda}_L$ still reports an outperforming approximation quality than $\mathcal{T}_L$ with an appropriate choice of $\lambda$.
\begin{table}[htbp]
  \centering
  \setlength{\abovecaptionskip}{0pt}
  \setlength{\belowcaptionskip}{10pt}
  \caption{Approximation errors and the sparsity of Lasso hyperinterpolation coefficients of a Wendland function \eqref{equ:wendland} over a unit sphere via $\mathcal{T}_{L}$, $\mathcal{F}_{L}$, and $\mathcal{L}^{\lambda}_{L}$, with different values of $\lambda$, fixed $a=0.02$ in generating single impulse noise, and different values of $\sigma$ used in generating Gaussian noise. In columns 6-9, $\lambda=10^{-2.5}$ for $\mathcal{L}^{\lambda}_L$ and $\lambda=10^{-3.5}$ for $\mathcal{T}_L$.}\label{tab:sphere}
  \begin{tabular}{c|cccc|cccc}
  \cline{1-9}
     & \multicolumn{4}{c|}{$\sigma=0.02$, and $\lambda$ takes} & \multicolumn{4}{c}{$\sigma$ takes} \\
        & $10^{-3.5}$ & $10^{-3}$ & $10^{-2.5}$ & $10^{-2}$  & 0.015 & 0.02 & 0.025 & 0.03  \\ \hline
    \small{Tikhonov}   & 0.0064  &  0.0114 &   0.0208  &  0.0301  &  0.0055  &  0.0067  &  0.0066 &   0.0075 \\
    Filtered  &  0.0119 &   0.0112  &  0.0110  &  0.0107  &  0.0089 &   0.0111 &   0.0131  &  0.0158 \\
    Lasso     &  0.0109  &  0.0081  &  0.0037  &  0.0053   &   0.0026 &   0.0039 &   0.0051  &  0.0073 \\
    \hline
    $\|\bm{\beta}\|_0$ & 233.6 & 184.8 & 55 & 3 & 33.6 &   56.8&   74.4&   95.8\\
    \hline
  \end{tabular}
\end{table}

\subsection{The cube}
We consider a unit cube $\Omega=[-1,1]^3\subset\mathbb{R}^3$ with $\text{d}\omega=\omega(\mathbf{x})\text{d}\mathbf{x}$, $\mathbf{x}=[x_1,x_2,x_3]^{\rm{T}}$, where the measure is given by the product Chebyshev weight function $\textrm{d}\omega=\omega(\mathbf{x})\textrm{d}\mathbf{x}$, and $\omega(\mathbf{x}):=(1/\pi^3)\prod_{i=1}^3(1/\sqrt{1-x_i^2})$. Thus in the unit cube, $$V=\int_{[-1,1]^3}\frac{1}{\pi^3}\prod_{i=1}^3\frac{1}{\sqrt{1-x_i^2}}\text{d}\mathbf{x}=1.$$ It is simple to implement quadrature in the cube, actually we should call it a cubature, by tensor products from its one dimensional version. However, we mention a tailored cubature for hyperinterpolation in 3-dimensional cube \cite{caliari2008hyperinterpolation,de2009new}. 
As $\mathbb{P}_L$ is a linear space of polynomials of degree at most $L$ in the cube $[-1,1]^3$, we have $d=\binom{L+3}{3}=(L+3)(L+2)(L+1)/6$. 
Note that for interval $[-1,1]$ and square $[-1,1]^2$, there exist minimal quadrature rules (cf. Definition \ref{def:minimal}) \cite{xu1996lagrange}, but for cube $[-1,1]^3$, the required number of nodes is much greater than the lower bound $d$. Thus the reason why we desire the new cubature in \cite{de2009new} is that the required number $N$ of points for its exactness is only about $2\left(\left\lfloor\frac{L}{2}\right\rfloor\right)^3(1+o(L^{-1}))$, roughly speaking, $N\approx L^3/4$, which is substantially less than its previous cubature rules, for example, see \cite{bojanov1997minimal}.

Let $\{p_{\ell}\}$ be a family of product orthonormal Chebyshev basis \cite{dunkl2014orthogonal} with $p_{\ell}(\mathbf{x}):=\tilde{T}_{\ell_1}(x_1)\tilde{T}_{\ell_2}(x_2)\tilde{T}_{\ell_3}(x_3)$, where $\tilde{T}_k(\cdot)=\sqrt{2}\cos(k\arccos(\cdot))$ for $k>0$ and $\tilde{T}_0(\cdot)=1$, and let $C_L=\{\cos(k\pi/L),k=0,\ldots,L\}$ be the set of $L+1$ Chebyshev-Lobatto points. Then choose a nodes set $\mathcal{X}_L=\left(C_{L+1}^{\rm{E}}\times C_{L+1}^{\rm{E}}\times C_{L+1}^{\rm{E}}\right)\cup\left(C_{L+1}^{\rm{O}}\times C_{L+1}^{\rm{O}}\times C_{L+1}^{\rm{O}}\right)$, 
where $C_{L+1}^{\rm{E}}$ and $C_{L+1}^{\rm{O}}$ are the restriction of $C_{L+1}$ to even (``E'') and odd (``O'') indices, respectively, and the corresponding weights are given by 
\begin{equation*}
w_{\bm{\xi}}:=\frac{4}{(L+1)^3}\begin{cases}
1 &\text{ if }\bm{\xi} \text{ is an interior point},\\
1/2 &\text{ if }\bm{\xi} \text{ is a face point},\\
1/4 &\text{ if }\bm{\xi} \text{ is an edge point},\\
1/8 &\text{ if }\bm{\xi} \text{ is a vertex point}.\\
\end{cases}
\end{equation*}

Fix $L$ as the degree of Lasso hyperinterpolation polynomial by $\ell_1+\ell_2+\ell_3\leq L$, and require the number of nodes, which is about $L^3/4$, to guarantee the exactness of the cubature rule \cite{de2009new}. Let 
\begin{equation}\label{equ:functionf}
F(\bm{\xi})=F(\xi_1,\xi_2,\xi_3)=\begin{cases}
w_{\bm{\xi}}f(\bm{\xi}),& \bm{\xi}\in\mathcal{X}_L,\\
0,&\bm{\xi}\in\left(C_{L+1}\times C_{L+1}\times C_{L+1}\right)\backslash\mathcal{X}_L.
\end{cases}
\end{equation}
The Lasso hyperinterpolation \eqref{equ:lassohyperinterpolation} becomes
\begin{equation}\label{equ:lassohyperinterpolationoncube}
\mathcal{L}_L^{\lambda}f:=\sum\limits_{\ell_1+\ell_2+\ell_3\leq L}\mathcal{S}_{\lambda\mu_{\ell}}\left(\alpha_{\ell}\right)p_{\ell}
\end{equation}
with hyperinterpolation coefficients 
\begin{equation*}
\alpha_{\ell}=\gamma_{\ell}\sum_{i=0}^{L+1}\left(   \sum_{j=0}^{L+1}\left(   \sum_{k=0}^{L+1}F_{ijk}\cos\frac{k\ell_1\pi}{L+1}     \right)\cos\frac{j\ell_2\pi}{L+1}         \right)\cos\frac{i\ell_3\pi}{L+1},
\end{equation*}
where $F_{ijk}=F\left(\cos\frac{i\pi}{L+1},\cos\frac{j\pi}{L+1},\cos\frac{k\pi}{L+1}\right)$, $0\leq i,j,k\leq L+1$, and
\begin{equation*}
\gamma_{\ell}=\prod_{s=1}^3\gamma_{\ell_s},\quad\gamma_{\ell_s}=\begin{cases}
\sqrt{2},&\ell_s>0,\\
1,&\ell_s=0,
\end{cases}
\quad s = 1,2,3.
\end{equation*}

\begin{remark}
The above derivation is based on a special case of the new cubature in \cite{de2009new}. Actually there are another three cases in a cube, but authors of \cite{de2009new} stated that numerical behaviors of these cubature rules should be the same. Hence \eqref{equ:lassohyperinterpolationoncube} is also special version of Lasso hyperinterpolation in $[-1,1]^3$ of degree $L$ based on the cubature in \cite{de2009new}, which finally computes $(L+1)(L+2)(L+3)/6\approx L^3/6$ coefficients and requires about $L^3/4$ nodes. 
\end{remark}

We test function $f(x,y,z)=\exp(-1/(x^2+y^2+z^2))$ contaminated by some noise (all nonzero values of the function are perturbed by Gaussian noise of standard derivation $\sigma=0.2$), via $\mathcal{T}_{L}$, $\mathcal{F}_{L}$, and $\mathcal{L}^{\lambda}_{L}$. It is hard to display the approximation results in a cube, which are in a format of 3D volumetric data. Figure \ref{figure3} displays these data along some slice planes, including $x=-0.25$, $x=0.5$, $x=1$, $y=0$, $y=1$, $z=-1$, and $z=0$, which provides a window into approximation results of the test function. Set $L = 50$, $\lambda=10^{-2.5}$ and all $\mu_{\ell}$ be 1 in this experiment, in which 33150 quadrature points are required to ensure the exactness \eqref{equ:exactquadrature} of the cubature rule. Figure \ref{figure3} illustrates great recovering ability of contaminated $f$ in the presence of noise. 
\begin{figure}[htbp]
  \centering
  \includegraphics[width=\textwidth]{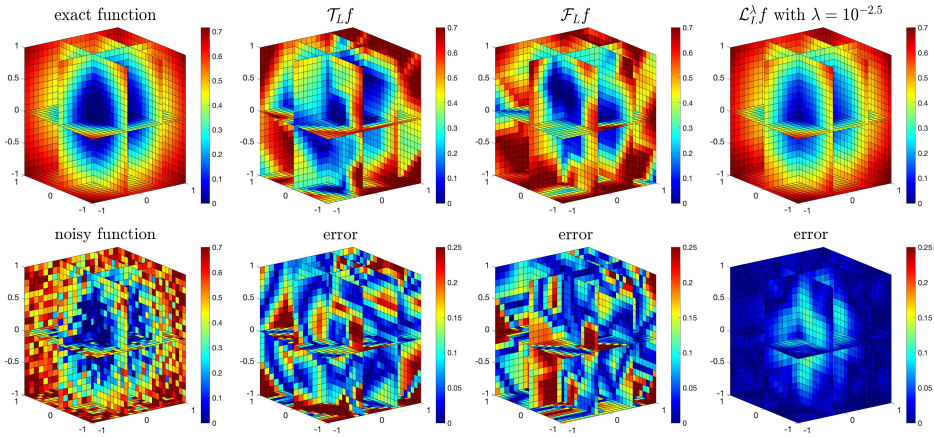}\\
  \caption{Approximation results of $f(x,y,z)=\exp(-1/(x^2+y^2+z^2))$ in the unit cube via $\mathcal{T}_{L}$, $\mathcal{F}_{L}$, and $\mathcal{L}^{\lambda}_{L}$.}\label{figure3}
\end{figure}

Table \ref{tab:cube} reports the $L_2$ approximation errors of the same function but with respect to different values of $\lambda$ and different $\sigma$ which describes the level of noise. Besides, the sparsity of Lasso hyperinterpolation coefficients is also reported. In this experiment, there should be 22100 coefficients in constructing $\mathcal{T}_{50}f$, $\mathcal{F}_{50}f$, and $\mathcal{L}_{50}^{\lambda}f$. For each setting of $\lambda$ and $\sigma$, we test 5 times and report the average values. Comparing with $\mathcal{T}_{L}$ and $\mathcal{F}_{L}$, Table \ref{tab:cube} shows much better denoising quality of $\mathcal{L}^{\lambda}_{L}$ with respect to Gaussian noise and more robustness of $\mathcal{L}^{\lambda}_{L}$ with respect to an increasing level of noise. The sparsity of $\bm{\beta}$ increases as the level of noise increasing, but this process seems to be more sensitive than that on the other three manifolds, which is due to a larger number (22100) of coefficients in constructing $\mathcal{T}_{50}f$.
\begin{table}[htbp]
  \centering
  \setlength{\abovecaptionskip}{0pt}
\setlength{\belowcaptionskip}{10pt}
  \caption{Approximation errors and the sparsity of Lasso hyperinterpolation coefficients of $f(x,y,z)=\exp(-1/(x^2+y^2+z^2))$ in a unit cube via $\mathcal{T}_{L}$, $\mathcal{F}_{L}$, and $\mathcal{L}^{\lambda}_{L}$, with different values of $\lambda$ and different values of $\sigma$ used in generating Gaussian noise.}\label{tab:cube}
  \begin{tabular}{c|cccc|cccc}
  \cline{1-9}
     & \multicolumn{4}{c|}{$\sigma=0.2$, and $\lambda$ takes} & \multicolumn{4}{c}{$\lambda=10^{-2.5}$, and $\sigma$ takes} \\
        & $10^{-2.5}$ & $10^{-2.4}$ & $10^{-2.3}$ & $10^{-2}$  & 0.002 & 0.05 & 0.2 & 0.4  \\ \hline
    \small{Tikhonov}   & 11.056 &  11.828 &  11.951 &  12.198 &  0.1469  &  3.0025  & 11.992 &  23.279 \\
    Filtered  & 11.557 &  10.758 &  11.436 &  11.337  &  0.1131 &   2.8722 &  11.241  & 24.725 \\
    Lasso     &  0.9744 &   0.9761 &   1.1014 &   1.7302  &  0.8764  &  0.8776 &   0.9870  &  6.2842 \\
    \hline
    $\|\bm{\beta}\|_0$ & 104.4 & 19 & 10.4 & 7 & 11 & 11 & 104.6 & 3353.4\\
    \hline
  \end{tabular}
\end{table}

\section{Final remarks}
In this paper, we introduce a novel approximation scheme $\mathcal{L}^{\lambda}_L$ for function approximation with noisy data, and derive general theory (error estimation) for it on general regions. The theory applies to four particular manifolds well, including an interval, a disc, a 2-sphere, and a 3-cube, but it is also shown that there exist obvious differences from manifold to manifold. Let us list some of them. In theory, from discussions on the case of smooth functions in Section \ref{sec:discussion}, additional geometric properties of $\Omega$ could relax the assumption on $f$ from $f\in\mathcal{C}^{k,\zeta}(\Omega)$ to $f\in\mathcal{C}^{k}(\Omega)$ with the convergence order $\mathcal{O}(L^{-k})$ maintained. In a numerical perspective, approximation qualities are sensitive to the level of noise in a 3-cube, but they are not so sensitive on the other three manifolds. On a 2-sphere, as spherical harmonics have an intrinsic characterization as the eigenfunctions of the Laplace-Beltrami operator, the operator can be incorporated into Tikhonov regularized least squares approximation \cite{an2012regularized}, which provides a constructive approach for function approximation with noisy data. Hence when $\mathcal{L}^{\lambda}_L$ is used in real-world applications, such as geomathematics (the earth’s potato shape can be mapped to a 2-sphere by an appropriate smooth mapping) and hydrocarbon exploration (modeled in a 3-cube), one shall take the geometric properties of particular $\Omega$ into account.

Our approach is achieved by a soft threshold operator, and our theory is derived with assumptions that additive noise $\epsilon_j$ is sub-Gaussian and $f\in\mathcal{C}(\Omega)$ or $\mathcal{C}^{k,\zeta}(\Omega)$ with $0<\zeta\leq1$. We may survey some of our results which can be improved or extended further. The $\ell_1$ regularization (Lasso) corresponds to a soft threshold operator, which is studied in this paper, but one may consider other types of threshold operators, e.g. a hard threshold operator \cite{foucart2011hard}. We adopt sub-Gaussian random variables to model noise, but from our discussion on noise in Section \ref{sec:noise}, once we know the distribution of $\epsilon_j$, we can estimate the expectation of $\|\bm{\epsilon}\|_{\infty}$ analytically. Thus other types of noise may also be studied, for example, noise modeled by sub-exponential random variables, including Rayleigh noise, gamma noise, and exponential noise. We may also consider other function spaces which measure the smoothness of $f$. An important direction is to consider some Sobolev spaces \cite{dai2011polynomial,MR2274179}. In this case, error estimations of $\mathcal{L}^{\lambda}_L$ may be derived and controlled by Sobolev norms of $f$ rather than uniform norms. We only investigate four low-dimensional manifolds in this paper; however, some other low- and high-dimensional manifolds could be considered, e.g. $s$-cubes, $(s-1)$-spheres, $s$-balls, and so forth. We refer to \cite{wang2014norm} for $s$-cubes, \cite{le2001uniform,sloan2011polynomial,sloan2012filtered} for $(s-1)$-spheres, and \cite{wade2013hyperinterpolation} for $s$-balls.

\section*{Acknowledgments}
We thank Professor Kendall E. Atkinson of the University of Iowa for providing us with MATLAB codes of disc-related experiments, which were conducted in \cite{hansen2009norm}. We are very grateful to the anonymous referees for their careful reading of our manuscript and their many insightful comments.

\bibliographystyle{siamplain}
\bibliography{references}
\end{document}